\documentclass{article}
\usepackage{amsmath,amssymb,amsthm}
\usepackage[svgnames, table]{xcolor} 

\usepackage[ruled,vlined]{algorithm2e}
\usepackage{setspace}
\usepackage{xr-hyper}
\usepackage[colorlinks=true, linkcolor=blue, citecolor=blue, filecolor=blue, runcolor=blue, urlcolor = blue]{hyperref}

\usepackage{fullpage}
\newtheorem{definition}{Definition}[section]
\newtheorem{theorem}{Theorem}[section]
\newtheorem{proposition}{Proposition}[section]
\newtheorem{lemma}{Lemma}[section]
\newtheorem{corollary}{Corollary}[section]
\newtheorem{remark}{Remark}[section]
\newtheorem{example}{Example}[section]


\usepackage{graphicx,calc}
\DeclareGraphicsExtensions{.png}

\begin{document}

\title{A parallel algorithm for the computation of the Jones polynomial}

\author{Kasturi Barkataki$^1$ \and Eleni Panagiotou$^1$}

\date{$^1$School of Mathematical and Statistical Sciences, Arizona State University,  Tempe, 85281, Arizona, USA \today}

\maketitle

\begin{abstract}
Knots, links and entangled filaments appear in many physical systems of interest in biology and engineering. Classifying knots and measuring entanglement is of interest both for advancing knot theory, as well as for analyzing large data that become available through experiments or Artificial Intelligence. In this context, the efficient computation of topological invariants and other metrics of entanglement becomes an urgent issue. The computation of common measures of topological complexity, such as the Jones polynomial, is \#P-hard and of exponential time on the number of crossings in a knot(oid) (link(oid)) diagram. In this paper, we introduce the first parallel algorithm for the exact computation of the Jones polynomial for (collections of) both open and closed simple curves in 3-space. This algorithm enables the reduction of the computational time by an exponential factor depending on the number of processors. We demonstrate the advantage of this algorithm by applying it to knots, as well as to systems of linear polymers in a melt obtained from molecular dynamics simulations. The method is general and could be applied to other invariants and measures of complexity.
\vspace{2pc}

\noindent{\it Keywords}: {\small Parallel Algorithm, Jones Polynomial, Knots, Linkoids, Divide and Conquer} 
\end{abstract}

\maketitle

\section{Introduction}
A significant challenge in knot theory and its applications is the computation of topological invariants, which are measures that can rigorously classify them. Quantifying complexity of systems of entangled, knotted or linked, filaments is of interest also in biology and engineering.  As modeling and experiments lead to the accumulation of large data, concerning for example, proteins, understanding the role of entanglement in these systems requires extensive capabilities of computation of knot and link invariants. In this paper, we introduce the first parallel algorithm for computing the Jones polynomial of knots/knotoids/linkoids and open curves in 3-space.

The Jones polynomial, an important knot invariant (\cite{Jones1985,Kauffman1987}), is known to be \#P-hard and it is of exponential time on the number of crossings of a knot diagram (\cite{jaeger1990computational}).  For special types of knot diagrams, such as torus knots, algebraic links, pretzel links, 2-bridge links, link diagrams with bounded treewidth, 3-braid links and Montesinos links, there are various efficient classical algorithms for evaluating the Jones polynomial (\cite{diao2009jones,hara2009fast,makowsky2005coloured,jin2010computing,mighton2001computing,murakami2007fast,utsumi2002computation}). Even though important for our understanding of knots, these remain special cases of knots and, in practice, such cases may have very low probability of occurrence. Knots and entangled filamentous matter, ranging from textiles to (bio)polymers, appear in our everyday life and their topological complexity is intricately related to their mechanics and function (\cite{Arsuaga2005,Edwards2000,Liu2018,Panagiotou2019,Qin2011,Sulkowska2012, sleiman2024geometric, bonato2022topological}). 
Physical systems of interest, such as models of polymer melts, DNA or collections of biopolymers in nature, correspond to link(oid) diagrams of the order of at least $\approx 10K$ crossings. Similarly, random knots are highly unlikely to result in special type of diagrams that can be treated with the above methods (\cite{cantarella2016knot, cantarella2018open, van2011universality, chan2007mean,Diao1995,Hanse1992,Micheletti2011}.

In \cite{burton-homflypt-fpt}, an algorithm for computing the HOMFLY-PT polynomial of arbitrary links  was introduced using a fixed-parameter tractable approach, based on the treewidth of the graph associated with a given link diagram, and this was implemented in practice in the topological software package \texttt{Regina} (\cite{regina}). Specifically, the algorithm achieves a sub-exponential runtime of $e^{\mathcal{O}(\sqrt{n} \log n)}$ for an arbitrary link diagram with $n$ crossings. This result uses the fact that the graph associated with an $n$-crossing link diagram is planar, with a treewidth bound of at most $\mathcal{O}(\sqrt{n})$. Algorithms based on the number of strands, $c$, of braid representations of arbitrary links were proposed and implemented in \cite{morton1990calculating} through connections to Hecke algebras and quantum groups. The cost of such strand based algorithms scales as $\mathcal{O}(2^c)$ in memory and $\mathcal{O}(4^c)$ in time. Quantum algorithms for approximating the Jones polynomial have been introduced at specific values (roots of unity) that are polynomial in time (\cite{Aharonov_Jones_Landau2006,Aharonov_2011}). Recent advances in quantum computing have led to the successful implementation of the algorithm in \cite{Aharonov_Jones_Landau2006}, for  approximating the Jones polynomial of knot diagrams of up to 104 crossings (\cite{laakkonen2025less, castelvecchimind}). 

An approach that has not been thoroughly explored is that of parallel algorithms for the exact computation of the Jones polynomial. An underlying difficulty in reducing the computational time of the Jones polynomial for a general diagram, which also applies to other invariants, is that the skein relation associated with the bracket polynomial expansion acts recursively at its crossings, giving an exponential cost depending on the number of crossings in a diagram.  Ideally,  the computational time of the Jones polynomial could be reduced by subdivision of a diagram to allow for a divide and conquer method, to apply state sum expansion  computations independently on each subdivided piece, as it was proposed in \cite{barnatan2007fast} for faster serial computation of the Khovanov homology of a knot.  However, the last step then would require reassembly, which can be as expensive as the computation of the invariant for the entire knot diagram itself. In the algorithm for the Khovanov homology in \cite{barnatan2007fast}, a method to reduce the computation time of the reassembly step was discussed by simplifying the output of each partial computation, based on decomposing Khovanov complexes into smaller subcomplexes  to which delooping and Gaussian elimination tools were applied. In \cite{ellenberg2014efficient}, bounds on the cutwidth of a planar graph are used to to prove that, given a cutwidth realization of a graph, the Kauffman bracket of a link with $n$ crossings can be computed in time, $\mathcal{O}(\operatorname{poly}(n)2^{C\sqrt{n}})$ where, $C=6\sqrt{2}+5\sqrt{3}$. However, finding an optimal cutwidth layout for a general graph is NP-hard, though, polynomial-time approximation algorithms exist, with the best known achieving an approximation ratio of $\mathcal{O}((\log n)^{3/2})$.

In this paper, we introduce a parallel algorithm for the computation of the Jones polynomial and a parallel implementation of it which employs the divide and conquer approach and exploits a novel, simple expression of reassembly based on the formalism introduced in \cite{Barkataki2022,Barkataki2024_virtual} for linkoids. Moreover, the algorithm enables the exact computation of the Jones polynomial not only for knots, but also for knot(oid)s/link(oid)s and (collections of) open curves in 3-space which may be equally, or more important for applications to entangled physical systems. 
More precisely, in this paper, a parallel algorithm for the computation of the Jones polynomial is introduced by subdividing a diagram a knot(oid)/link(oid) diagram into smaller pieces. The smaller pieces are linkoids (open arc diagrams that are not necessarily tangles), which can be classified separately (\cite{Turaev2012,Barkataki2022, Barkataki2023pbc, Barkataki2024_virtual}) and  the resulting states from the state-sum expansion of such linkoids are grouped based on distinct permutations of the endpoints. By decomposing a knot into its constituent linkoid pieces and applying state sum expansions, computations can be performed independently on each subdivided piece, thus enabling the implementation of parallel algorithms. By deriving a closed form expression of the Jones polynomial of a link in terms of the states of its constituent linkoid subdivisions, the assembly of the results is done via a Cartesian product computation which is also implemented in a parallel algorithm. This leads to a computational time of the Jones polynomial given as $
\displaystyle T(n) = \mathcal{O}\left( 2^{\frac{n}{p}+1}\right) + \mathcal{O}\left(2^{\frac{n}{\sqrt{p}}-\log p}\right)$, where $n$ denotes the number of crossings in a knot/link diagram and $p$ denotes the number of processors. In practice, by employing subdivision that employs the cutwidth algorithm and grouping of state, the algorithm can be approximated by time complexity of the order $\mathcal{O}(2^{\frac{n}{p}})$ for large $n$ (resp. $\mathcal{O}(2^{\sqrt{32n}-\log p})$ when employing a treewidth based algorithm). 

We provide numerical results which illustrate the advantages in performance of the subdivision and gluing techniques with respect to time complexity for mathematical knots, as well as for random systems of linear polymers in a melt through molecular  dynamics simulations. A \texttt{python} package that implements the methods described in this paper, and enables the computation of the Jones polynomial for collections of open or closed curves in 3-space, is made available on \texttt{GitHub} (\cite{ParallelJonesPolynomial2025}), along with an explanatory manual text. The method is general and could be applied to parallelize the computation of other invariants, such as the Arrow polynomial and Khovanov homology (\cite{khovanov2000categorification}, \cite{dye2009virtual}).  We also discuss how this framework advances our understanding of aspects of knots in terms of properties of the constituent linkoids and for some very special cases, we remark that the Jones polynomial of a link diagram can be completely determined by the Jones polynomial of a subdivided linkoid piece. More interestingly, we show that the Jones polynomial of a knot(oid)/link(oid) can be expressed as a linear combination of the Jones polynomial over the virtual spectrum of any of its constituent linkoid parts.

The paper is organized as follows: Section \ref{sec_jones_poly} provides the necessary notations and definitions related to the Jones polynomial of links, linkoids and collections of open curves in 3-space. Section \ref{sec_par-alg} provides the parallel algorithm and its associated time complexity for the  computation of the Jones polynomial, by means of subdivision of a link(oid) diagram and grouping of states into distinct classes. Section \ref{sec-jones-pieces} addresses the topological complexity, in terms of the Jones polynomial, of link(oid) diagrams in relation to the complexity of their constituent parts. Section \ref{sec_discussion} presents the conclusions of this study.

\section{The Jones polynomial}
\label{sec_jones_poly}

Knots and links are classified into equivalence classes up to ambient isotopy. The Jones polynomial was introduced in \cite{Jones1985} to classify knots and links from their diagrams. The precise definitions follow below: 

\begin{definition}(\textit{Knot/Link and Knot/Link diagram} \cite{adams2004knot}) A \textit{link} of n components is defined to be n simple closed curves in $\mathbb{R}^3$. A link of one component is a \textit{knot}. A regular projection of a link is called a \textit{link diagram} if the overcrossing/undercrossing information is marked at every double point, which are called \textit{crossings}, in the projection. Knot/link diagrams are classified using the Reidemeister moves, which correspond to ambient isotopic knots/links in 3-space. 
\end{definition}

\begin{definition}(\textit{Jones polynomial of knot/link} \cite{Kauffman1987})
The Jones polynomial of an oriented link, $\mathcal{L}$ is defined as, \begin{equation}f_{\mathcal{L}} = (-A^3) ^{-\operatorname{Wr}(L)} \langle L  \rangle \hspace{0.05cm},\label{eq-jones-poly-link}\end{equation}
where $L$ is any diagram of $\mathcal{L}$, $\operatorname{Wr}(L)$ is the writhe of $L$ and $\langle L  \rangle$ is the bracket polynomial of $L$  which is characterized by the following skein relation and initial conditions:
\begin{equation}
\begin{split}
\left\langle\raisebox{-5pt}{\includegraphics[width=.05\linewidth]{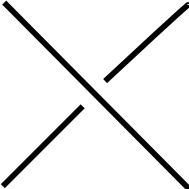}}\right\rangle=A\left\langle\raisebox{-5pt}{\includegraphics[width=.05\linewidth]{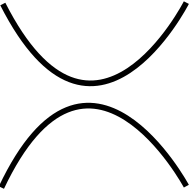}}\right\rangle +A^{-1}\left\langle\raisebox{-5pt}{\includegraphics[width=.05\linewidth]{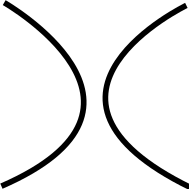}}\right\rangle,&  \bigl\langle \tilde{L}\cup \bigcirc\bigr\rangle=d\bigl\langle \tilde{L}\bigr\rangle, \langle\bigcirc\bigr\rangle = 1
\end{split}
\end{equation}
\noindent where $d=-A^2-A^{-2}$ and $\tilde{L}$ is any knot/link diagram. The \textit{ bracket polynomial} of $L$ can be expressed as the following state sum expression :
\begin{equation}
    \bigl\langle L  \bigr\rangle \hspace{0.05cm} := \hspace{0.05cm} \sum_S A^{\alpha (S)} d^{|S| -1} \quad , 
\end{equation}  where $S$ is a state corresponding to a choice of smoothing over all double points in $L$;\hspace{0.05cm} $\alpha(S)$ is the algebraic sum of the smoothing labels of $S$ and $|S|$ is the number of circular components in $S$. The Jones polynomial is obtained by substituting $A=t^{-1/4}$ in Equation \ref{eq-jones-poly-link}.
\end{definition}

Linkoids extend the diagrammatic study of links to  open arc diagrams which are classified up to diagrammatic isotopy that is described below (\cite{Barkataki2024_virtual,Barkataki2022,Gugumcu2017,Turaev2012,Gugumcu2017b,Gugumcu2021parity, Manouras2021}):

\begin{definition}
(\textit{Knotoid/linkoid diagram and Knotoid/linkoid} \cite{Barkataki2024_virtual})
 A \textit{knotoid/linkoid diagram} $L$ with $n \in \mathbb{N}$ components in $\Sigma$ is a generic immersion of $\bigsqcup_{i=1}^{n}[0,1]$ into the interior of $\Sigma$ (i.e., $S^2 = \mathbb{R}^2 \cup \infty$), with only transversal double points as singularities, each endowed with over/under crossing data. The images of $0$ and $1$ (endpoints) under this immersion are the foot and head of the component, respectively, and are distinct from each other and from the double points. A \textit{knotoid/linkoid} is an equivalence class of such diagrams under Reidemeister moves and isotopy, with the restriction that strands adjacent to endpoints cannot pass over or under a transversal strand (See Figure \ref{linkoid-moves}). \label{linkoid} A labelled linkoid can be associated with a strand permutation and a spectrum of closure permutations relating it to a spectrum of virtual links (\cite{Barkataki2024_virtual}). The \textit{strand permutation} of a linkoid $L$ with $n$ components is a permutation, $\tau$, on the set of $2n$ labelled endpoints, pairing each endpoint, $i$, with its corresponding partner, $\tau(i)$, on the same component. A \textit{closure permutation} of $L$ is a permutation, $\sigma\in S_{2n}$, on the set of endpoints that satisfies:  $\sigma^2 = \text{id}$ and $\sigma(i) \neq i$ for all $i$ (See Figure \ref{split_open_tref}). The virtual spectrum of $L$ is the set of all possible virtual closures of $L$, formed by pairing the endpoints using any closure permutation $\sigma$. Let $L_\tau$ be a linkoid with $n$ components, with strand permutation $\tau$ and a closure permutation $\sigma$ and let $E= \{ 1, 2, \cdots, 2n\}$ denote the set of the labelled endpoints of $L_\tau$. A \textit{segment cycle} of $L_\tau$, with respect to the closure permutation $\sigma$, is defined to be an orbit of a point in $E$, under the action of the group $\langle \tau, \sigma \rangle$ on $E$. The set of all segment cycles of the linkoid under the action of $\langle \tau, \sigma \rangle$ is written as the  quotient of the action, i.e, $\displaystyle E/\langle \tau, \sigma \rangle$.
\end{definition}

\begin{figure}[h!]
    \centering
    \includegraphics[scale=0.5]{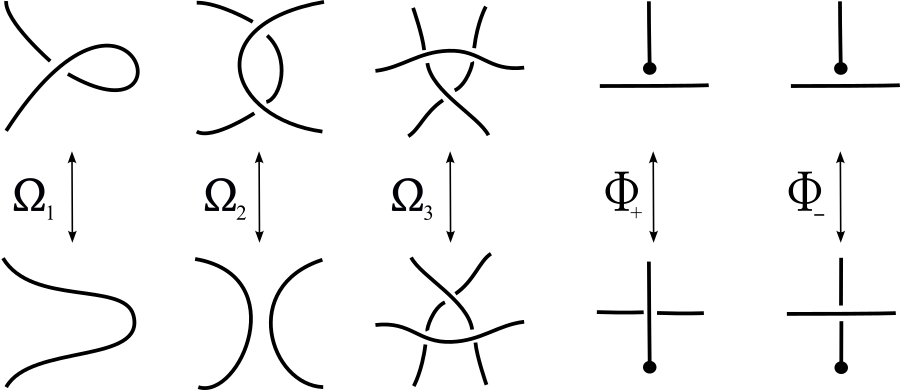}
    \caption{Omega moves (Reidemeister moves $\Omega_1,\Omega_2,\Omega_3$) and forbidden moves ($\Phi_+, \Phi_-$) on linkoid diagrams.}
    \label{linkoid-moves}
\end{figure}

The Jones polynomial of a linkoid with respect to a given closure permutation is defined as follows :

\begin{definition}(\textit{Jones polynomial of a linkoid with respect to $\sigma$} (\cite{Barkataki2022}, \cite{Barkataki2024_virtual}))
The Jones polynomial of an oriented linkoid diagram, $L$, with respect to a closure permutation, $\sigma$, is defined as, \begin{equation}f_{L^\sigma} = (-A^3) ^{-\operatorname{Wr}(L)} \langle L^\sigma \rangle \hspace{0.05cm},\end{equation}
where  $\operatorname{Wr}(L)$ is the writhe of the linkoid diagram and $\langle L^\sigma \rangle$ is the bracket polynomial of $L$, with respect to $\sigma$ which is characterized by the following skein relations and initial conditions:
\begin{equation}
\begin{split}
\left\langle\raisebox{-5pt}{\includegraphics[width=.05\linewidth]{fig/cross10.png}}\right\rangle=A\left\langle\raisebox{-5pt}{\includegraphics[width=.05\linewidth]{fig/cross30.png}}\right\rangle +A^{-1}\left\langle\raisebox{-5pt}{\includegraphics[width=.05\linewidth]{fig/cross20.png}}\right\rangle,& \hspace{0.5cm}\bigl\langle \tilde{L}\cup \bigcirc\bigr\rangle=d\bigl\langle \tilde{L}\bigr\rangle, \hspace{0.5cm} \langle\bigcirc\bigr\rangle = 1,\quad  \left\langle S_\ell \right\rangle=d^{|E/\langle \tau_S, \sigma \rangle|}\quad 
\label{bkt_sk_L}
\end{split}
\end{equation}
\noindent where $d=-A^2-A^{-2}$ ; $\tilde{L}$ is any linkoid diagram ; $S_\ell$ is the trivial linkoid formed by the collection of $n$ labelled open segments in a state, $S$, in the state sum expansion of $L$ and $\tau_S$ is the strand permutation of $S_\ell$. The term,
$|E/\langle \tau_S, \sigma \rangle|$, is the number of distinct segment cycles in $S_\ell$,  with respect to $\sigma$ and it can be alternatively expressed as :
\begin{equation}\begin{split}
   \displaystyle |E/\langle \tau_S, \sigma \rangle| &= \frac{1}{2 |\sigma \tau_S|}\left[ 2n + \sum_{p=1}^{|\sigma \tau_S|-1}  \sum_{\{\mathcal{C}_i : p \equiv 0 \operatorname{mod} l(\mathcal{C}_i)\} } l(\mathcal{C}_i) \right ].\\
   \end{split}\label{eq-C-count2}
\end{equation}
where $|\sigma \tau_S|$ is the order of $\sigma \tau_S$, $\prod_i \mathcal{C}_i$ denotes the cycle decomposition of $\sigma \tau_S$ and $l(\mathcal{C}_i)$ denotes the length of $\mathcal{C}_i$ ( See proof of Proposition \ref{count-prop} in Appendix).

The \textit{generalized bracket polynomial} of $L$, with respect to $\sigma$, can be expressed as the following state sum expression :
\begin{equation}
    \bigl\langle L^{\sigma} \bigr\rangle \hspace{0.05cm} := \hspace{0.05cm} \sum_S A^{\alpha (S)} d^{|S| + |E/\langle \tau_S, \sigma \rangle| -1} \quad , 
\end{equation}
\noindent where $S$ is a state corresponding to a choice of smoothing over all double points in $L$;\hspace{0.05cm} $\alpha(S)$ is the algebraic sum of the smoothing labels of $S$ and $|S|$ is the number of circular components in $S$ (See Example \ref{ex-linkoid-jones}). 
\label{def_jones}
\end{definition}

\begin{example}
    The Jones polynomial of the linkoid diagram $L_1$ in Figure \ref{split_open_tref}, with respect to closure permutation, $(1 \quad 2)(3 \quad 4)$, is equal to $-A^{-6}-A^{-8}+A^{-12}-A^{-16}$. Similarly, the Jones polynomial of the linkoid diagram $L_2$  with respect to closure permutation, $(5 \quad 6)(7 \quad 8)$, is equal to $-A^{-2}-A^{-10}$. 
    \label{ex-linkoid-jones}
\end{example}

Open curves in 3-space can be associated to a spectrum of linkoids (\cite{Panagiotou2020b,Barkataki2022}). The Jones polynomial of a collection of open curves is defined as follows :
\begin{definition}(Jones polynomial of a collection of open curves in 3-space (\cite{Barkataki2022}))
    Let $\mathcal{L}$ denote a collection of $m \in \mathbb{N}$ open curves in 3-space. Let $L_\xi$ denote the projection of $\mathcal{L}$ on a plane with normal vector $\xi$. The normalized bracket polynomial of $\mathcal{L}$ is defined as
\[
f_L = \frac{1}{4\pi} \int_{\xi \in S^2} (-A^3)^{-Wr(L_\xi)} \langle L_\xi \rangle \, dS, \tag{4.1}
\]
where each \( L_\xi \) is a linkoid diagram and its bracket polynomial can be calculated using Definition \ref{def_jones}. Note that the integral is taken over all vectors \( \xi \in S^2 \) except a set of measure zero (corresponding to the irregular projections). This gives the Jones polynomial of a collection of open curves in 3-space with the substitution \( A = t^{-1/4} \).\label{def-jones-open}
\end{definition}

The coefficients of the Jones polynomial of open curves in 3-space are continuous functions of the curve coordinates that tend to the classical topological invariant when the endpoints of the chains tend to coincide to form a closed link (\cite{Panagiotou2020b,Barkataki2022}). 

\section{A parallel algorithm for the Jones polynomial}
\label{sec_par-alg}

This section introduces a method to express the Jones polynomial of a link(oid) diagram in terms of the bracket polynomial states of its constituent linkoids. This is achieved by subdividing the original diagram into a collection of disjoint linkoid diagrams with fewer crossings. Theorem \ref{prop-bkt-split} gives a closed form expression  for the Jones polynomial of a link(oid) diagram, $L$, which is subdivided into $2^m$ pieces where $m \in \mathbb{N}$ (See Figure \ref{split_open_tref}). This leads to a parallel algorithm on $p=2^m$ processors (Algorithm \ref{alg:parallel_jones}) for the computation of the Jones polynomial by employing optimal subdivision and combining (grouping) of states of linkoids (Theorem \ref{prop-par-avg}).

\begin{figure}[h!]
    \centering
    {\includegraphics[scale=0.45]{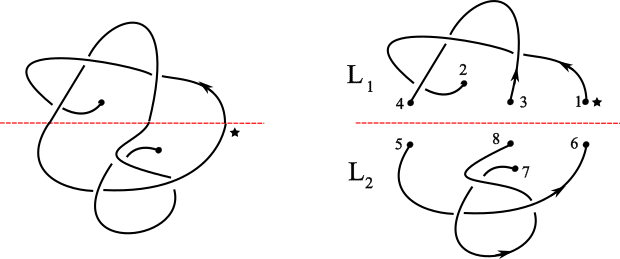}}
    \caption{A diagram of a link(oid) can be seen as the gluing of two linkoid diagrams.  (Left) A diagram of a pure knotoid, $K$, and the axis along which it is to be split with the starting point of $L_1$ denoted $\star$. (Right) The two subdivided linkoid pieces, $L_1$ and $L_2$, with strand permutations $(1 \quad 2)(3 \quad 4)$ and  $(5 \quad 6)(7 \quad 8)$, respectively, on the labelled endpoints obtained upon the splitting. The closure permutation $(4 \quad 5)(8 \quad 3)(6 \quad 1)$ on $L_1 \cup L_2$ retrieves $K$. The Jones polynomial computation for each of the two components is done in parallel and the results are combined, also in parallel, to give the Jones polynomial of the knot.}
    \label{split_open_tref}
\end{figure}

\begin{definition}(Subdivision of a link(oid))  
Let $L$ be a linkoid diagram with $n$ crossings. A subdivision of $L$ is a diagram $L^{'}$, resulting from $L$ by removing a finite number of points from the arcs of $L$, resulting in a disjoint union of $k \geq 2$ linkoid diagrams $L_i$ (for $i=1, 2, \dots, k$). Each $L_i$ is called a subdivided piece of $L$.
\label{def-subdiv}
\end{definition}

The process of combining the constituent linkoids $L_i$ to give a knot/link $L$ can be made precise via a gluing operation.

\begin{definition}(Gluing Permutation) Let $L_1$ and $L_2$ be linkoid diagrams with $n_1$ and $n_2$ components, respectively, and $\tau_1$, $\tau_2$ be the respective strand permutations on their labelled endpoints. A gluing permutation $\sigma$ is a product of $k \leq n_1 + n_2$ disjoint transpositions on the endpoints of $L_1$ and $L_2$, where at least one transposition $(i \ j)$ satisfies $i \in L_1$ and $j \in L_2$ (or vice versa). The permutation $\sigma$ introduces a virtual closure arc between each $i$ and $j$ in the transposition. The resultant glued linkoid diagram (which will have strictly fewer than $n_1 + n_2$ components) is denoted $L_1 \cup_\sigma L_2$. \end{definition}

The following theorem gives a closed formula for the expression of the Jones polynomial of a link(oid) as a function of the states of its constituent linkoids. 


\begin{theorem}
Let $\displaystyle L={\cup_{\sigma}}_{i=1}^{2^m} L_i$ denote a linkoid $L$ obtained by the gluing of $2^m$ disjoint linkoid diagrams, $L_i$, where $1\leq i \leq 2^m$ and  $m\in \mathbb{N}$, with gluing permutation $\sigma$.
The Jones polynomial of $L$ is $f_L=(-A^3)^{-Wr(L)}\langle L \rangle$, where the bracket polynomial of $L$ can be expressed in terms of the states of the constituent linkoids $L_i$ as follows:\begin{equation*}
    \begin{aligned}
        \left\langle L \right\rangle
        &= \sum_{\mathcal{S}=\left( S_{i_{k_i}} \right)_{i=1}^{2^m}} 
       \left( \left(\prod_{i=1}^{2^m}A^{\alpha(S_{i_{k_i}})} 
        d^{|S_{i_{k_i}}|} \right) \times d^{\left| E / \left\langle \prod_{k=1}^{2^m} \tau_{S_{i_{k_i}}}, \sigma \right\rangle \right|-1}\right)
    \end{aligned}
\end{equation*}
where ${\cal{S}}$ is a state of $L$ expressed as a gluing of states of the constituent linkoids. More precisely,   ${\cal{S}}$  is a tuple of states $S_{i_{k_i}}$, for $i=1, \dotsc,2^m$, $k_i=1,\dotsc, 2^{n/2^m}$, where $S_{i_{k_i}}$ represents a state with state permutation  $\tau_{S_{i_{k_i}}}$ in the state sum expansion of $L_i$, \hspace{0.05cm} $\alpha(S_{i_{k_i}})$ is the algebraic sum of the smoothing labels of $S_{i_{k_i}}$, \hspace{0.05cm} $|S_{i_{k_i}}|$ is the number of circular components in $S_{i_{k_i}}$, \hspace{0.05cm}  $E$ is the set of all endpoints in $\cup_{i=1}^{2^m} L_i$ and $\left| E / \left\langle \prod_{k=1}^{2^m} \tau_{S_{i_{k_i}}}, \sigma \right\rangle \right|$ is the number of distinct segment cycles in $\mathcal{S}$.
\label{prop-bkt-split}
\end{theorem}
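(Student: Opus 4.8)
The plan is to set up a bijective correspondence between the states in the state‑sum expansion of $L$ and the tuples $\mathcal{S}=(S_{i_{k_i}})_{i=1}^{2^m}$ of states of the constituent linkoids, and then to track how the exponents of $A$ and of $d$ transform under this correspondence. First I would note that in $L={\cup_\sigma}_{i=1}^{2^m}L_i$ every classical crossing lies in exactly one piece $L_i$: the arcs introduced by the gluing permutation $\sigma$ are virtual closure arcs, and the Kauffman bracket skein relation only resolves classical crossings (virtual crossings, and the closure arcs themselves, carry no smoothing choice). Hence a choice of smoothing at all classical crossings of $L$ is the same datum as an independent choice of smoothing at the crossings of each $L_i$, which yields the bijection $S \leftrightarrow \mathcal{S}$ between states of $L$ and tuples of piece‑states, and at the same time gives $\alpha(S)=\sum_{i=1}^{2^m}\alpha(S_{i_{k_i}})$, since the smoothing labels are bookkept crossing by crossing.

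Next I would analyze the circular components of a resolved state. Resolving the crossings inside $L_i$ produces a trivial linkoid $(S_{i_{k_i}})_\ell$ consisting of $|S_{i_{k_i}}|$ circular components together with open segments whose endpoints are paired by the strand permutation $\tau_{S_{i_{k_i}}}$. Because the pieces are disjoint, the total endpoint set is $E=\bigsqcup_i E_i$, the permutations $\tau_{S_{i_{k_i}}}$ have pairwise disjoint supports, and their product $\tau_{\mathcal{S}}=\prod_{k=1}^{2^m}\tau_{S_{i_{k_i}}}$ is exactly the strand permutation of the disjoint union of these trivial linkoids. I would then argue that the circular components of the resolved state $S$ of $L$ fall into two disjoint families: those already closed off inside some piece, contributing $\sum_{i=1}^{2^m}|S_{i_{k_i}}|$, and those formed by alternately traversing open segments (following $\tau_{\mathcal{S}}$) and gluing arcs (following $\sigma$). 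Tracing such a loop shows that the set of endpoints it visits is precisely an orbit of $E$ under $\langle\tau_{\mathcal{S}},\sigma\rangle$ — a segment cycle in the sense of Definition \ref{linkoid} — and conversely each orbit yields exactly one loop; so the second family has size $|E/\langle\prod_{k=1}^{2^m}\tau_{S_{i_{k_i}}},\sigma\rangle|$, computable also via \eqref{eq-C-count2}. Therefore the number of circular components of $S$ equals $|S|=\sum_{i=1}^{2^m}|S_{i_{k_i}}|+\big|E/\langle\prod_{k=1}^{2^m}\tau_{S_{i_{k_i}}},\sigma\rangle\big|$.

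Assembling these facts, the contribution of $S$ to the state sum $\langle L\rangle=\sum_S A^{\alpha(S)}d^{|S|-1}$ becomes $\big(\prod_{i=1}^{2^m}A^{\alpha(S_{i_{k_i}})}d^{|S_{i_{k_i}}|}\big)\,d^{|E/\langle\prod_{k}\tau_{S_{i_{k_i}}},\sigma\rangle|-1}$, and summing over all tuples $\mathcal{S}$ produces the displayed formula; multiplying by $(-A^3)^{-Wr(L)}$ gives $f_L$. I would carry the argument out directly for a finite collection of pieces; the statement for $2^m$ equal‑sized pieces with $k_i$ ranging over the $2^{n/2^m}$ states of each is then the bookkeeping used for the later complexity analysis, and an induction on $m$ that glues two pieces at a time via the Gluing Permutation definition and Definition \ref{def_jones} is an equally valid route. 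The step that requires the most care is the loop count: one must verify that no circular component is tallied twice (each loop is either internal to a single piece or uses at least one gluing arc, never counted in both ways) and that the correspondence between gluing‑arc loops and $\langle\tau_{\mathcal{S}},\sigma\rangle$‑orbits is a genuine bijection — this is exactly where the segment‑cycle formalism of Definition \ref{linkoid} and the count in \eqref{eq-C-count2} (Proposition \ref{count-prop}) does the work, so I would invoke it rather than re‑derive the orbit count from scratch.
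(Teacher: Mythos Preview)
Your proposal is correct and follows essentially the same route as the paper: both arguments rest on the bijection between states of $L$ and tuples of piece-states (since every classical crossing lies in a unique $L_i$), the additivity $\alpha(S)=\sum_i\alpha(S_{i_{k_i}})$, the observation that the glued state permutation is the product $\prod_i\tau_{S_{i_{k_i}}}$ of disjoint permutations, and the decomposition of the loop count of a state into $\sum_i|S_{i_{k_i}}|$ internal circles plus $|E/\langle\prod_i\tau_{S_{i_{k_i}}},\sigma\rangle|$ segment cycles. Your write-up is in fact more explicit than the paper's on why the two families of circles are disjoint and why the gluing arcs carry no smoothing choice, but the underlying argument is the same.
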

\begin{proof}
Given that $ L={\cup_{\sigma}}_{i=1}^{2^m} L_i$, is a gluing of linkoids $L_i$, where $i=1, \dotsc, 2^m$ with respect to gluing permutation $\sigma$, any state of $L$ can be expressed as a gluing of states of its constituent linkoids also with respect to $\sigma$, i.e. ${\cup_{\sigma}}_{i=1}^{2^m} S_{i_{k_i}}$.  Since $\tau_{S_{i_{k_i}}}$ (the state permutation of the $k_i^{th}$ state of linkoid $L_i$) permutes only the endpoints of $L_i$,  the state permutation of ${\cup_\sigma}_{i=1}^{2^m} S_{i_{k_i}}$ is a product of disjoint permutations, given by $\tau_{{\cup_\sigma}_{i=1}^{2^m} S_{i_{k_i}}} = \prod_{i=1}^{2^m} \tau_{S_{i_{k_i}}}$ and the number of circular components $\left|{\cup_\sigma}_{i=1}^{2^m} S_{i_{k_i}}\right|$ is the sum of the individual circular components, i.e., $
\left|{\cup_\sigma}_{i=1}^{2^m} S_{i_{k_i}}\right| = \sum_{i=1}^{2^m}\left| S_{i_{k_i}}\right|$.
The number of distinct segment cycles in the state ${\cup_\sigma}_{i=1}^{2^m} S_{i_{k_i}}$ is given as, $\left|E/\left\langle \prod_{i=1}^{2^m} \tau_{S_{i_{k_i}}}, \sigma \right\rangle\right|$ (See Definition \ref{def_jones}). 
 Thus, the bracket polynomial of $L$ can be expressed as follows.
\begin{equation}\begin{split} 
  \langle L \rangle 
  &=\sum_{\mathcal{S}=\left( S_{i_{k_i}} \right)_{i=1}^{2^m}}  A^{\sum_{i=1}^{2^m} \alpha(S_{i_{k_i}})} 
    \left\langle {\cup_{\sigma}}_{i=1}^{2^m} S_{i_{k_i}} \right\rangle \\
     &= \sum_{\mathcal{S}=\left( S_{i_{k_i}} \right)_{i=1}^{2^m}}   \left(A^{\sum_{i=1}^{2^m} \alpha(S_{i_{k_i}})} d^{\sum_{i=1}^{2^m}|S_{i_{k_i}}|} \times d^{\left|E/\left\langle \prod_{i=1}^{2^m} \tau_{S_{i_{k_i}}}, \sigma \right\rangle\right|-1}\right).
     \end{split}\label{Kbkt_plain}
 \end{equation}
\end{proof}

A parallel algorithm for the computation of the Jones polynomial of a link(oid) diagram can be obtained that leads to the computational time in Theorem \ref{thm-prop-par-avg}.



\begin{theorem}
    Let $L$ be a link(oid) diagram with $n$ crossings. The Jones polynomial of $L$ can be computed via Algorithm \ref{alg:parallel_jones} in a system with $2^m$ processors with time complexity
\begin{equation*}
    \begin{split}
\displaystyle T(n) &=\mathcal{O}\left( 2^{\frac{n}{2^m}+1}\right) + \mathcal{O}\left(2^{\frac{n}{\sqrt{2^m}}-m}\right).
    \end{split}
    \end{equation*}
\label{thm-prop-par-avg}
\end{theorem}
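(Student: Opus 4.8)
The plan is to establish the correctness of Algorithm~\ref{alg:parallel_jones} and then charge its running time to two independent phases: the per-piece state-sum computations and the parallel reassembly. For correctness, I would first realize $L$, via Definition~\ref{def-subdiv} together with a gluing permutation $\sigma$ recording the severed arcs, as a gluing $L={\cup_{\sigma}}_{i=1}^{2^m} L_i$ whose $2^m$ pieces each carry $\lfloor n/2^m\rfloor$ or $\lceil n/2^m\rceil$ crossings. Theorem~\ref{prop-bkt-split} already expresses $\langle L\rangle$ as a sum over tuples $\mathcal S=(S_{i_{k_i}})_{i=1}^{2^m}$ of constituent states. Grouping, for each piece $L_i$, the states $S$ by their strand permutation $\tau_S$ and precomputing the bucket polynomials
\[
P_i(\tau)\;=\;\sum_{S\,:\,\tau_S=\tau} A^{\alpha(S)}\,d^{|S|},
\]
the identity of Theorem~\ref{prop-bkt-split} collapses to a sum over tuples of \emph{classes}, $\langle L\rangle=\sum_{(\tau^{(1)},\dots,\tau^{(2^m)})}\bigl(\prod_{i} P_i(\tau^{(i)})\bigr)\,d^{|E/\langle\prod_i\tau^{(i)},\sigma\rangle|-1}$, with each exponent evaluated from Equation~\ref{eq-C-count2}; multiplying by $(-A^3)^{-\operatorname{Wr}(L)}$ and substituting $A=t^{-1/4}$ then returns $f_L$ exactly. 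So the algorithm is: subdivide; in parallel compute the state sums and fill the buckets $P_i$; in parallel enumerate class-tuples, weight and accumulate; finalize with the writhe factor and substitution.

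For Phase~1, assign piece $L_i$ to processor $i$. Expanding the Kauffman skein relation at the at most $\lceil n/2^m\rceil$ crossings of $L_i$ visits at most $\sum_{j=0}^{\lceil n/2^m\rceil}2^{j}\le 2^{\,n/2^m+2}$ partial diagrams, and at each leaf one reads off $(\alpha(S),|S|,\tau_S)$ and updates $P_i(\tau_S)$ in time $\operatorname{poly}(n)$. All $2^m$ processors run this simultaneously and independently, so the parallel cost of Phase~1 is $\mathcal{O}(2^{\,n/2^m+1})$ after absorbing the constant and the $\operatorname{poly}(n)$ factor --- the first summand of $T(n)$.

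For Phase~2, let $N_i$ be the number of distinct strand permutations occurring among the states of $L_i$; grouping replaces the $2^{n}$-term sum over states of $L$ by the $\prod_{i=1}^{2^m} N_i$-term sum over class-tuples. The crucial estimate is that the subdivision can be chosen --- for instance by cutting along a low-cutwidth linear layout of the crossing graph of $L$ --- so that $\prod_{i=1}^{2^m}N_i=\mathcal{O}(2^{\,n/\sqrt{2^m}})$: each $N_i$ is controlled by the number of endpoints of $L_i$, hence by (a function of) the widths of the two cuts bounding it, and summing these widths over the $2^m$ pieces produces the exponent $n/\sqrt{2^m}$. Enumerate the class-tuples and distribute them evenly among the $2^m$ processors; each processor then handles $\mathcal{O}(2^{\,n/\sqrt{2^m}}/2^m)=\mathcal{O}(2^{\,n/\sqrt{2^m}-m})$ tuples, and for each tuple it computes $|E/\langle\prod_i\tau_{S_{i_{k_i}}},\sigma\rangle|$ from the cycle decomposition of $\sigma\prod_i\tau_{S_{i_{k_i}}}$ via Equation~\ref{eq-C-count2} in time $\operatorname{poly}(n)$, multiplies the stored buckets, and accumulates a partial polynomial; an $\mathcal{O}(m)$-round reduction over polynomial-size polynomials then combines the partials into $\langle L\rangle$. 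Absorbing the $\operatorname{poly}(n)$ per-tuple cost, the reduction, and the $\operatorname{poly}(n)$ writhe and substitution steps into $\mathcal{O}(\cdot)$, Phase~2 costs $\mathcal{O}(2^{\,n/\sqrt{2^m}-m})$; adding the two phases yields $T(n)=\mathcal{O}(2^{n/2^m+1})+\mathcal{O}(2^{n/\sqrt{2^m}-m})$.

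The routine parts are Phase~1 and the arithmetic bookkeeping; the technical heart is the bound $\prod_{i=1}^{2^m}N_i=\mathcal{O}(2^{\,n/\sqrt{2^m}})$. Proving it requires (i) exhibiting a subdivision whose pieces each have few endpoints, (ii) bounding the number of \emph{realizable} strand permutations of a linkoid piece in terms of that endpoint count, exploiting the planar, non-crossing structure of its states, and (iii) verifying that this bound is not destroyed by the gluing pattern $\sigma$. A secondary point is scheduling: one must check that the $\prod_i N_i$ class-tuples can be enumerated and load-balanced across the $2^m$ processors, with the weights $d^{|E/\langle\cdot,\sigma\rangle|-1}$ evaluated independently per tuple, using only polynomial overhead --- so that no reassembly bottleneck reintroduces an $\mathcal{O}(2^{n})$ cost.
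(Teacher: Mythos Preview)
Your Phase~1 analysis and the correctness reduction via Theorem~\ref{prop-bkt-split} are fine and match the paper. The gap is in Phase~2: you propose a \emph{flat} enumeration of all $\prod_{i=1}^{2^m} N_i$ class-tuples, and stake the bound on the claim $\prod_i N_i = \mathcal{O}(2^{\,n/\sqrt{2^m}})$. This claim is not what the paper proves, and your cutwidth heuristic does not deliver it. With $N_i \le \mathbf{C}_{c_i} \sim 4^{c_i}$ (Corollary~\ref{corr-any-ktang}), one has $\prod_i N_i \lesssim 2^{2\sum_i c_i}$; but $\sum_i c_i$ equals the total number of cut points, and for a balanced subdivision into $2^m$ pieces of $n/2^m$ crossings each with $c$ strands per piece, this is $2^m c$. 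Under the paper's own hypothesis $c \le n/2^{m+1}$, your product is only bounded by $2^{n}$, not $2^{n/\sqrt{2^m}}$. A linear cutwidth layout does not rescue this: with planar cutwidth $w=\mathcal{O}(\sqrt{n})$ and $2^m$ pieces one gets $\sum_i c_i = \mathcal{O}(2^m\sqrt{n})$, which matches $n/\sqrt{2^m}$ only for one specific value of $m$.

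What Algorithm~\ref{alg:parallel_jones} actually does in Step~3, and what the paper's proof analyzes, is a \emph{hierarchical} pairwise gluing over $m$ rounds, with \emph{re-grouping by state permutation after every merge}. That re-grouping is the mechanism that beats $\prod_i N_i$: once two neighbouring pieces are glued, the number of distinct permutations of the merged piece is bounded by $\mathbf{C}_{c'}$ for its new strand count $c'$, which is far smaller than the product $\mathbf{C}_c^2$ of the inputs. The paper lays the $2^m$ pieces out as a $2^{m/2}\times 2^{m/2}$ grid, tracks the strand count $c_j=\tfrac{c(1+2^j)}{2}$ (row stage) and $\hat c_j=\tfrac{c(2^{m/2}+2^j)}{2}$ (column stage) through the merges, bounds each merge cost by the squared Catalan number at the previous level, and sums to obtain $\mathcal{O}(2^{2c\sqrt{2^m}-m})$; substituting $c\le n/2^{m+1}$ then yields $\mathcal{O}(2^{\,n/\sqrt{2^m}-m})$. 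Your proposal omits this intermediate re-grouping and the grid-based strand-count accounting, and so does not arrive at the stated complexity by the route Algorithm~\ref{alg:parallel_jones} prescribes.
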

\begin{proof}
Let $L$ be a link(oid) diagram. Consider (any) subdivision of $L$ into $2^m$ linkoids, $L_i$, $1\leq i \leq 2^m$, of $\frac{n}{2^m}$ (or $\lfloor n/{2^m}\rfloor$) crossings each. The total time complexity of doing $m$ levels of partitioning $n$ crossings by finding planar edge separators is linear i.e. $\mathcal{O}(mn)$.  By recursively applying the skein relation, the state expansion of each $L_i$ is evaluated with a time complexity of  $\mathcal{O}\left( 2^{\frac{n}{2^m}+1}\right)$, which is done in parallel for all linkoid pieces over $2^m$ processors. The states of each linkoid consist of circles and strands that correspond to state permutations.   Next, distinct states of linkoids with equivalent state permutations on the endpoints can be consolidated (grouped) eliminating redundancy, which can occur when states with the same state permutation but different loops appear (See Section \ref{sec-state-perms-seg-cyc} of Appendix).  By  extracting a $d$ factor for each circle and by adding the factors of states with the same strand permutations, we obtain only distinct state permutations with an associated polynomial in linear time. Let $\mathcal{S}_i^{(0)}$ represent the resolved states of $L_i$, and let $\mathcal{P}_i^{(0)}$ represent the set of distinct state permutations within these resolved states, where $1\leq i \leq 2^m$. The recombination of states is performed in parallel and recursively over $m$ iterations as follows (in analogy with gluing linkoids pairwise in $m$ iterations): at the $ j^{th}$ iteration, $\mathcal{P}^{(j)}_{u}$ is defined as the set of distinct state permutations obtained from the product $\mathcal{P}^{(j-1)}_{2u-1} \times \mathcal{P}^{(j-1)}_{2u}$, which consists of state permutation patterns from the $(j-1)^{th}$ iteration that share common endpoints, where $1 \leq j \leq m $ and $1 \leq u \leq 2^{m-j}$. After $m$ iterations, the final set $\mathcal{P}_{1}^{(m)}$ contains all the distinct state permutations of $L$.  For any $1 \leq j \leq m$ and $1 \leq u \leq 2^{m-j}$ the sets $\mathcal{P}^{(j)}_{u}$  can be computed via parallelization over $2^m$ processors with a time complexity of $ \mathcal{O}\left(\frac{|\mathcal{P}_{2u-1}^{(j-1)}|.|\mathcal{P}_{2u}^{(j-1)}|.2^{m-j}}{2^m}\right)$. The time complexity of computing the writhe of $L$ is $\mathcal{O}(n^2)$. Thus, the net time complexity of evaluating the Jones polynomial of $L$ can be expressed as follows:
\begin{equation}
    \begin{split}
       T(n)
 &= \mathcal{O}(mn)+\mathcal{O}\left( 2^{\frac{n}{2^m}+1}\right) +\sum_{j=1}^{m}\mathcal{O}\left(|\mathcal{P}_{2u-1}^{(j-1)}|.|\mathcal{P}_{2u}^{(j-1)}|.2^{-j}\right)+ \mathcal{O}(n^2). \\
    \end{split}
    \label{eq-min-grp-1}
\end{equation}


To estimate $|\mathcal{P}_u^{(j)}|$ we need to determine the number of strands involved at each level of gluing of linkoids. Without loss of generality, suppose $L$ is subdivided in the form of a $2^{m/2}\times 2^{m/2}$ grid, with $2^m$ pieces, such that the endpoints are distributed uniformly across all the four boundaries of each individual grid piece. (For simplicity, we may consider this to be a square grid, but it may not look geometrically as square.) Without loss of generality we also assume that each piece has $c$ strands. 
The $2^{m/2}$ pieces in any given row in the grid can be glued in $m/2$ stages. At the $j^{th}$ stage of gluing in a row, all the $2^{m/2-j}$ pieces are evaluated in parallel over $2^m$ processors. Any piece (connected unit) in the $j^{th}$ stage  of gluing by row has $c_j = \frac{c(1+2^j)}{2}$ strands and it can be obtained within time complexity $\mathcal{O}\left(\operatorname{C}_{c_{j-1}}^2 2^{-(\frac{m}{2}+j)}\right)$, where $\operatorname{C}_{c_{j-1}}$ is the $c_{j-1}^{th}$ Catalan number (See Corollary \ref{corr-any-ktang} in Appendix). 
Once each row of the grid becomes a long, connected piece, the final task is the gluing of these $2^{m/2}$ rows, which is again performed in $m/2$ stages. A piece in the $j^{th}$ level of gluing by column has $\hat{c}_j = \frac{c(2^{m/2}+2^j)}{2}$ strands and it can be obtained within time complexity $\mathcal{O}\left(\operatorname{C}_{\hat{c}_{j-1}}^2 2^{-(\frac{m}{2}+j)}\right)$, where $\operatorname{C}_{\hat{c}_{j-1}}$ is the $\hat{c}_{j-1}^{th}$ Catalan number. Using Stirling's formula, $\operatorname{C}_j \sim \frac{4^j}{j^{3/2}\pi}$, for the $j^{th}$ Catalan number, the time complexity $T(n)$ of Equation \ref{eq-min-grp-1} is given as follows:
\begin{equation}
\begin{split}
\displaystyle T(n) &= \mathcal{O}\left( 2^{\frac{n}{2^m}+1}\right) + \mathcal{O}\left(\sum_{j=1}^{m/2}  \left(\operatorname{C}_{c_{j-1}}^2 + \operatorname{C}_{\hat{c}_{j-1}}^2\right)2^{-(\frac{m}{2}+j)}\right)\\
&= \mathcal{O}\left( 2^{\frac{n}{2^m}+1}\right) + \mathcal{O}\left(2^{2c\sqrt{2^m}-m}\right).
\end{split}
 \label{eq-min-group-Tn-formal}
\end{equation}
We can assume that the number of of strands in a linkoid piece is at most half the number of crossings in the piece, $c\leq \frac{n}{2^{m+1}}$, i.e. that every strand is involved in at least crossing. Note that if that is not the case, the strand contributes trivially in counting distinct state permutations.  Note also that grouping of states  will always occur when $c\leq n / 2^m$ ( See Lemma \ref{lem-max-c-for-n} in Appendix). Therefore, the maximal number of distinct state permutations, $\operatorname{C}_c < 2^{\frac{n}{2^m}}$ and the net time complexity of computing the Jones polynomial of $L$, by employing grouping of states, in a system with $2^m$ processors is estimated as follows:
\begin{equation}
\begin{split}
\displaystyle T(n) &= \mathcal{O}\left( 2^{\frac{n}{2^m}+1}\right) + \mathcal{O}\left(2^{\frac{n}{\sqrt{2^m}}-m}\right).
\end{split}
 \label{eq-min-group-last}
\end{equation}
\end{proof}




\begin{algorithm}
\SetAlgoLined
\caption{Parallel Algorithm for Jones Polynomial}
\label{alg:parallel_jones}
\textbf{Input:} Linkoid diagram $L$ with $n$ crossings and $m$ levels of subdivision.\\
\textbf{Output:} $J(L)$: Jones polynomial of $L$.\\
\textbf{Step 1: Subdivide $L$ into $2^m$ pieces (using a Minimum Bisection Algorithm).}\\
Initialize array to store the subdivided pieces as $\Lambda_{current} \leftarrow [L]$.\\
\For{$i = 1$ to $m$ \textbf{in parallel}}
    {Set $\Lambda_{next} \leftarrow [\quad ]$\\
    \For{$\lambda$ in $\Lambda_{current}$}
        {Minimally bisect $\lambda$ into linkoid pieces $\lambda_1, \lambda_2$.\\
        Append $\lambda_1$ and $\lambda_2$ to $\Lambda_{next}$.}
        $\Lambda_{current} \leftarrow \Lambda_{next}$.}
\textbf{Step 2: Compute the bracket state sum expansion for each piece in parallel and group the states according to distinct permutations.}\\
Initialize array, $\mathcal{S}= [ \quad ]$, to store the bracket states of the pieces, $L_i$, in the array $\Lambda_{current}$, in terms of distinct state permutations.\\
\For{$i = 1$ to $2^m$ \textbf{in parallel}}
    {$\mathcal{S}^{(0)}_i \leftarrow \text{Bracket\_States}(L_i)$.\\
    $\mathcal{P}^{(0)}_i \leftarrow  \text{States in $\mathcal{S}^{(0)}_i$ with coefficients grouped}$ with respect to distinct state permutations.\\
    Append $\mathcal{P}^{(0)}_i$ to $\mathcal{S}$}
\textbf{Step 3: Combine the bracket states of all pieces in parallel over $m$ iterations to obtain the bracket polynomial of $L$}\\
\For{$j = 1$ to $m$ \textbf{in parallel}}
    {\For{$u = 1$ to $2^{m-j}$ \textbf{in parallel}}
       {$\mathcal{P}_u^{(j)} \leftarrow$ States in $\mathcal{P}_{2u-1}^{(j-1)} \times \mathcal{P}_{2u}^{(j-1)}$ with coefficients grouped with respect to distinct state permutations.}
    }
Return the final bracket polynomial $B_{\text{combined}} = \mathcal{P}_1^{(m)}$\\
\textbf{Step 4: Normalize the bracket polynomial using the writhe of $L$}\\
Compute $\text{Wr} = \text{Writhe}(L)$\\
$J(L) = \text{Normalize}(B_{\text{combined}}, \text{Wr})$\\
\Return{$J(L)$}
\end{algorithm}


An algorithm to compute the Jones polynomial  in an approximately improved computational time is discussed next. This algorithm is based on  optimizing the subdivision of a link(oid) diagram which involves applying a minimum bisection approximation algorithm to subdivide the diagram, followed by grouping the states - derived from the state-sum expansion within each piece - based on distinct permutations(patterns of the long segments) of the endpoints of each respective piece. More precisely, the proposed algorithm focuses on identifying optimal subdivisions while minimizing the number of strands in each piece. 



\begin{corollary}
    Let $L$ be a link(oid) diagram with $n$ crossings. The Jones polynomial of $L$ can be computed via Algorithm \ref{alg:parallel_jones} in a system with $2^m$ processors of time complexity approximated by
    
\begin{equation}
    \begin{split}
\displaystyle T(n) &\approx \mathcal{O}\left( 2^{\frac{n}{2^m}+1}\right) + \mathcal{O}\left(2^{C\sqrt{32n}-m}\right),
    \end{split}
     \label{eq-min-group-last}
    \end{equation}

\noindent where $C=6\sqrt{2}+5\sqrt{3}$.
\label{prop-par-avg}
\end{corollary}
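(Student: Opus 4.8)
The plan is to start from the exact time complexity established in Theorem~\ref{thm-prop-par-avg}, namely $T(n) = \mathcal{O}(2^{n/2^m+1}) + \mathcal{O}(2^{2c\sqrt{2^m}-m})$ from Equation~\ref{eq-min-group-Tn-formal}, and replace the crude uniform-grid bound on the strand count $c$ by a sharper one coming from an approximation algorithm for the minimum bisection / cutwidth of the planar graph associated with the link diagram. The first step is to recall that a link diagram with $n$ crossings has an associated $4$-regular planar graph, and that the result of \cite{ellenberg2014efficient} (invoked in the introduction) gives a cutwidth layout whose width is bounded by $C\sqrt{n}$ with $C = 6\sqrt 2 + 5\sqrt 3$; more precisely, repeatedly applying a minimum-bisection approximation yields a recursive subdivision in which the number of strands crossing any cut at the relevant level is $O(\sqrt n)$ rather than $O(n/2^m)$. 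The constant $\sqrt{32}$ should emerge by tracking how the cutwidth bound propagates through the $m$ levels of pairwise gluing and matching it against the $2^m$-piece subdivision, i.e.\ one substitutes the effective strand count $c \approx C\sqrt{32n}/(2\sqrt{2^m})$ (or the analogous bound that makes $2c\sqrt{2^m} = C\sqrt{32n}$) into the second term of Equation~\ref{eq-min-group-Tn-formal}.

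Concretely, I would organize the argument as follows. First, state that instead of the balanced square-grid subdivision used in the proof of Theorem~\ref{thm-prop-par-avg}, one uses the subdivision produced by a minimum-bisection approximation algorithm (Step~1 of Algorithm~\ref{alg:parallel_jones}); this does not change the first term $\mathcal{O}(2^{n/2^m+1})$, since each of the $2^m$ pieces still has roughly $n/2^m$ crossings and its state sum is computed by the same recursive skein expansion. Second, bound the number of boundary strands of the pieces at each gluing level: the key inequality is that for a planar diagram the bisection width (hence the number of long segments shared between two sibling pieces) is $O(\sqrt{\text{number of crossings in the union}})$, so at level $j$ a connected piece with $\approx 2^{j-m}n$ crossings has $O(\sqrt{2^{j-m}n})$ strands. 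Third, substitute the Catalan-number bound $\mathrm{C}_c \le 4^c$ (Stirling) for the number of distinct state permutations of a piece with $c$ strands into the recombination cost $\sum_{j} \mathcal{O}(|\mathcal{P}^{(j-1)}|^2 2^{-j})$, carry out the geometric-type sum over $j$, and observe that the dominant term is governed by the last (coarsest) gluing level; the exponent then collapses to $C\sqrt{32n} - m$ after absorbing the $2^{-m}$ processor speedup into the $-m$ in the exponent and folding all $O(1)$ losses from the $m$ summands and from passing between bisection width and cutwidth into the constant.

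The main obstacle will be the bookkeeping in the third step: showing rigorously that summing the Catalan-squared terms $\mathrm{C}_{c_{j-1}}^2 2^{-(m/2+j)}$ and $\mathrm{C}_{\hat c_{j-1}}^2 2^{-(m/2+j)}$ over $j = 1, \dots, m/2$ is dominated (up to the constant $C$) by the single largest term, and that this largest term has exponent exactly $C\sqrt{32n} - m$ rather than something off by a multiplicative constant inside the square root. This requires being careful that the approximation ratio $\mathcal{O}((\log n)^{3/2})$ of the best known minimum-bisection approximation is absorbed into the ``$\approx$'' in the statement (which is why the corollary is stated as an approximation, not an equality), and that the worst-case cutwidth constant $C = 6\sqrt 2 + 5\sqrt 3$ from \cite{ellenberg2014efficient} is the right one to propagate — the factor $\sqrt{32}$ presumably comes from the relation between the number of pieces $2^m$, the per-piece strand count, and the global cutwidth when the subdivision is chosen to balance the two terms of $T(n)$. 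Everything else (the unchanged first term, the linearity of the subdivision and writhe costs, the Stirling estimate for Catalan numbers) is routine and already available from the earlier results in the excerpt.
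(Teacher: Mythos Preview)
Your proposal is correct and follows essentially the same route as the paper: start from the expression $T(n)=\mathcal{O}(2^{n/2^m+1})+\mathcal{O}(2^{2c\sqrt{2^m}-m})$ of Theorem~\ref{thm-prop-par-avg} and replace the generic strand bound by the cutwidth-based estimate $c\approx C\sqrt{n/2^{m-3}}$, which makes $2c\sqrt{2^m}=C\sqrt{32n}$. The paper's argument is in fact just this one substitution (citing \cite{djidjev2003crossing,gazit1990planar} rather than \cite{ellenberg2014efficient} for the constant $C$); it does not redo the level-by-level gluing analysis, so your second and third steps, while not wrong, are more elaborate than what is actually required.
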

\begin{proof}
Let $L$ be a link(oid) that is subdivided in $L_i$, $i=1,\dotsc,2^m$ linkoids, of at most $c$ strands each. The time complexity of the Jones polynomial is given by $T(n)  = \mathcal{O}\left( 2^{\frac{n}{2^m}+1}\right) + \mathcal{O}\left(2^{2c\sqrt{2^m}-m}\right)$ (See Theorem \ref{thm-prop-par-avg}). 
By associating a graph to the link(oid) diagram by treating each crossing as a vertex and the arcs connecting successive crossings as edges, we can employ cutwidth to count the number of strands in each piece. For any graph with $n$ vertices, it can be shown that it has a cutwidth of at most $(6\sqrt{2}+5\sqrt{3})\sqrt{n}$ (\cite{djidjev2003crossing, gazit1990planar}). By accounting that a link(oid) diagram with $n$ crossings is split into $2^m$ pieces, as  a $2^{m/2}\times 2^{m/2}$ square grid, it can be inferred that each $L_i$ has $C\sqrt{\frac{n}{2^{m-5}}}$ boundary points and  $C\sqrt{\frac{n}{2^{m-3}}}$ strands, where $C=6\sqrt{2}+5\sqrt{3}$.  
The required expression is obtained by substituting $c=  C\sqrt{\frac{n}{2^{m-3}}}$.
\end{proof}

\begin{remark}
 Finding appropriate subdivisions of a knot(oid) (resp. link(oid)) that minimize the number of strands in each linkoid can be solved by employing a minimum bisection algorithm for a corresponding graph.
The minimum bisection problem on graphs is an NP-hard problem which involves finding a graph bisection of minimum size, where the  bisection size is defined to be the number of edges connecting the two resulting sets. In practice, there exist polynomial-time algorithms (in the size of the input graph) that can approximate the solution (\cite{feige2002polylogarithmic}).
\texttt{Python} implementations of the minimum bisection algorithm of a graph (\cite{NetworkX,kernighan1970efficient}).
\end{remark}

The \texttt{GitHub} package, \textit{Parallel Jones Polynomial} (\cite{ParallelJonesPolynomial2025}) implements Algorithm \ref{alg:parallel_jones} (with minimum bisection) in \texttt{python}. This algorithm computes the Jones polynomial in parallel for  knots, links, knotoids, linkoids and open curves in 3-space. This algorithm realizes the approximate time complexity of Corollary \ref{prop-par-avg}.  

To demonstrate the computational effectiveness of the parallel algorithm, we apply it to knots of increasing number of crossings in which Algorithm \ref{alg:parallel_jones} is employed to perform the computation of the Jones polynomial with respect to 2 subdivisions of a given knot diagram.  The results shown in Figure \ref{fig_time_comp} demonstrate the superior performance of the parallel algorithm compared to the serial method, particularly for cases with larger crossing numbers. For small crossing numbers the  floating point operations (FLOPS) involved in subdivisions and gluing operations dominate the computational time over the gain of parallelization. The exponential nature of the difference between the fits in Figure \ref{fig_time_comp} confirms that the advantage of parallel processing increases with the number of crossings in a diagram.

\begin{figure}[h!]
\centering
\includegraphics[scale=0.45]{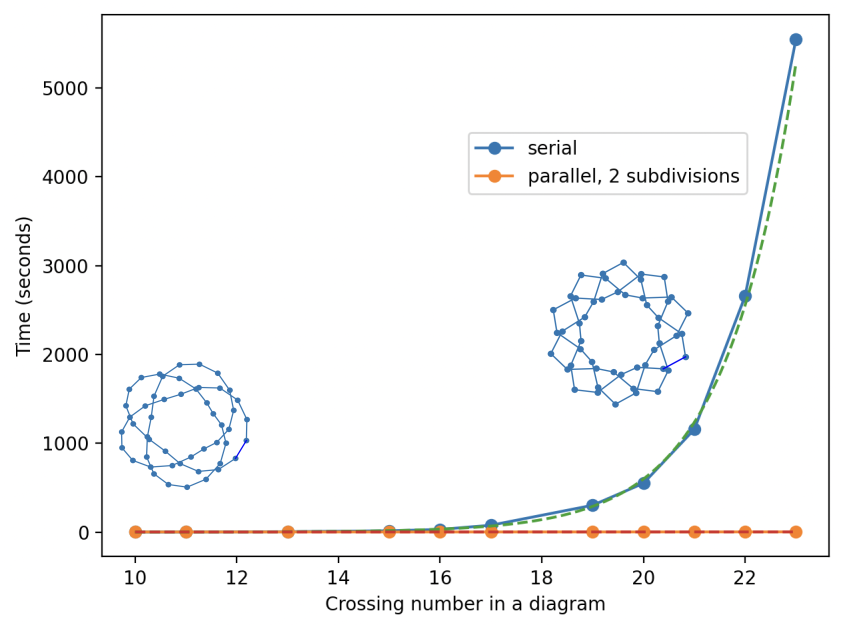}
    \caption{Computational time (in seconds) of the Jones polynomial as a function of the number of crossings in a diagram using the serial algorithm and the parallel algorithm, Algorithm \ref{alg:parallel_jones}, with 2 subdivisions and 2 processors.}
    \label{fig_time_comp}
\end{figure}

\begin{figure}[h!]
\centering
\includegraphics[scale=0.4]{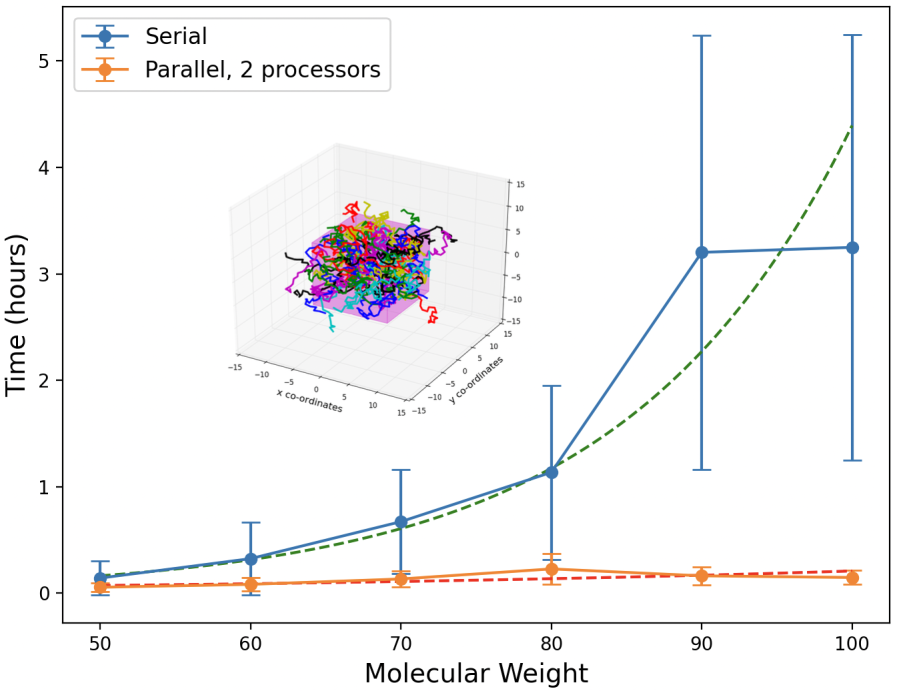}
\caption{Computational time (in hours) of the Jones polynomial of linear chains in a polymer melt as a function of molecular weight using the serial algorithm and the parallel algorithm, Algorithm \ref{alg:parallel_jones}, with 2 subdivisions and 2 processors.}
    \label{fig_poly_time_comp}
\end{figure}

Physical systems of entangled filaments, consist of linear polymers whose diagrams have many crossings, and are in a sense random and non-symmetric. Figure \ref{fig_poly_time_comp} shows a comparison between the serial and the parallel algorithms in terms of the computational time of the Jones polynomial with respect to increasing molecular weight of the constituent chains in polymer melt systems of linear polymers.  These are collections of open curves in 3-space whose Jones polynomial is evaluated using Definition \ref{def-jones-open}.  The data for the calculation is obtained by MD simulations from \cite{Panagiotou2013b} and the Jones polynomial per chain is obtained by averaging over 150 projections. We observe a significant reduction of computational time with the parallel algorithm even with only two processors.


The parallel algorithm for the Jones polynomial can be further improved, as described below in Corollary \ref{corr-sub-exp-par}, by applying the treewidth based fixed parameter tractable algorithm of \cite{burton-homflypt-fpt} (See Section \ref{sec-treewidth} of Appendix) to a link diagram at the level of its subdivided linkoid pieces. 

\newpage

\begin{corollary}\label{corr-sub-exp-par}
Let $L$ be a link(oid) diagram with $n$ crossings. The Jones polynomial of $L$ can be computed via the parallel Algorithm \ref{alg:parallel_jones} in a system with $2^m$ processors of time complexity approximated by

\begin{equation}
    \begin{split}
      \displaystyle T(n) &\approx \mathcal{O}\left(n^3 2^{\sqrt{\frac{9n}{2^m}}-m}\right) + \mathcal{O}\left( e^{\sqrt{\frac{n}{2^m}} \log \frac{n}{2^m}}\right) + \mathcal{O}\left(2^{C\sqrt{32n}-m}\right),
    \end{split}
    \label{eq-time-treewidth}
    \end{equation}

\noindent where $C=6\sqrt{2}+5\sqrt{3}$.
\end{corollary}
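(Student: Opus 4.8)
The plan is to mirror the structure of the proof of Corollary \ref{prop-par-avg}, replacing only the cost of the per-piece state computation, while keeping the subdivision and recombination phases essentially intact. The starting point is the general decomposition from Theorem \ref{prop-bkt-split}: once $L$ is subdivided into $2^m$ linkoid pieces $L_i$, the total time is the sum of (a) the time to partition the associated planar graph into $2^m$ balanced parts via repeated planar separators, (b) the time to compute the state-sum (now with grouping) for each $L_i$ in parallel, and (c) the time for the $m$-stage Cartesian-product recombination of distinct state permutations, plus the $\mathcal{O}(n^2)$ writhe computation. Phases (a) and (c) are unchanged from Corollary \ref{prop-par-avg}; the only new ingredient is phase (b).

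For phase (b), instead of naively expanding the skein relation at all $n/2^m$ crossings of a piece (which costs $\mathcal{O}(2^{n/2^m})$), I would invoke the treewidth-based fixed-parameter-tractable algorithm of \cite{burton-homflypt-fpt} (summarized in Section \ref{sec-treewidth} of the Appendix) applied to each $L_i$ separately. Since each $L_i$ is a linkoid with $n/2^m$ crossings, its associated graph is planar on $n/2^m$ vertices, hence has treewidth $\mathcal{O}(\sqrt{n/2^m})$; the FPT algorithm then computes the (generalized) bracket of $L_i$ in time $\mathcal{O}(n^3 2^{\sqrt{9 n/2^m}})$ (the explicit constant $9$ coming from the treewidth bound and the base of the transfer-matrix recursion used in that algorithm — one should quote the precise constant from the cited Appendix section) or, absorbing polynomial factors into the exponent, $e^{\mathcal{O}(\sqrt{n/2^m}\log(n/2^m))}$. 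Running this on $2^m$ processors in parallel contributes the term $\mathcal{O}\!\left(n^3 2^{\sqrt{9n/2^m}-m}\right)$ — one must be slightly careful here about whether the $2^{-m}$ factor reflects dividing $2^m$ independent jobs over $2^m$ processors (so that the per-processor time is just that of one job, and the $2^{-m}$ bookkeeping factor is really absorbed), versus the recombination accounting; I would present it exactly as in Equation \ref{eq-time-treewidth} and flag that the $-m$ exponent is inherited from the parallel-over-processors accounting established in the proof of Theorem \ref{thm-prop-par-avg}. The middle term $\mathcal{O}\!\left(e^{\sqrt{n/2^m}\log(n/2^m)}\right)$ is then just the poly-free restatement of the same cost.

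For phase (c), I would reuse verbatim the grid-based recombination estimate from Corollary \ref{prop-par-avg}: subdividing $L$ as a $2^{m/2}\times 2^{m/2}$ grid with cutwidth-bounded boundaries gives each piece $c = C\sqrt{n/2^{m-3}}$ strands with $C = 6\sqrt 2 + 5\sqrt 3$, and the pairwise gluing of states over $m$ stages, bounded by products of Catalan numbers $\mathrm{C}_{c} \sim 4^{c}/(c^{3/2}\pi)$, yields exactly $\mathcal{O}\!\left(2^{2c\sqrt{2^m}-m}\right) = \mathcal{O}\!\left(2^{C\sqrt{32 n}-m}\right)$ after substituting $c$ (note $2c\sqrt{2^m} = 2C\sqrt{n/2^{m-3}}\sqrt{2^m} = C\sqrt{32 n}$). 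Adding the three surviving contributions — the treewidth per-piece term, its poly-free form, and the recombination term — and discarding the dominated $\mathcal{O}(mn)$ and $\mathcal{O}(n^2)$ terms gives Equation \ref{eq-time-treewidth}. The main obstacle, and the place to be most careful, is getting the constant in the first exponent right and justifying that the treewidth bound $\mathcal{O}(\sqrt{n/2^m})$ genuinely applies to the \emph{linkoid} pieces (not just closed link diagrams) and that the FPT algorithm of \cite{burton-homflypt-fpt} extends to the generalized bracket polynomial of linkoids with a fixed closure permutation — this is the one point where the argument leans on the Appendix material rather than being self-contained, so I would make the citation to Section \ref{sec-treewidth} explicit and state the extension as the key input.
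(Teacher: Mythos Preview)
Your approach is essentially the paper's: subdivide into $2^m$ linkoid pieces, apply the treewidth-based FPT algorithm of \cite{burton-homflypt-fpt} to each piece in parallel, then recombine via the Catalan/cutwidth bound exactly as in Corollary~\ref{prop-par-avg}. There is one misreading worth correcting: you interpret the first two terms of \eqref{eq-time-treewidth} as ``the same cost written two ways'', but in the paper they are \emph{distinct} phases --- the term $\mathcal{O}\!\bigl(n^3\, 2^{\sqrt{9n/2^m}-m}\bigr)$ is the cost of \emph{constructing} a nice tree decomposition of each $\Gamma(L_i)$ (the constant $9$ comes from the tree-decomposition construction bound cited from \cite{cygan2015michalpilipczuk,kloks1994treewidth}, not from the FPT transfer matrix), whereas $\mathcal{O}\!\bigl(e^{\sqrt{n/2^m}\log(n/2^m)}\bigr)$ is the cost of \emph{running} Burton's algorithm on that decomposition. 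The paper also does not simply defer the linkoid extension to the Appendix as you propose: it makes the extension explicit by writing down a descending-linkoid analogue of Theorem~\ref{prop-bkt-split}, in which the leaves $\ell_{L_i}$ of the decision trees of the individual $L_i$ play the role of states and their gluing $\ell=\cup_\sigma\ell_{L_i}$ is a descending diagram of $L$, so that $f_L=\sum_{\ell}\bigl(\prod_i g_{\ell_{L_i}}(t)\,d^{|\ell_{L_i}|}\bigr)d^{|E/\langle\tau_\ell,\sigma\rangle|-1}$.
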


\begin{proof}
    Let $ L={\cup_{\sigma}}_{i=1}^{2^m} L_i$, be link(oid) diagram obtained as a gluing of linkoid diagrams $L_i, i=1, \dotsc, 2^m$, consisting of $ \approx \frac{n}{2^m}$ crossings each, with respect to gluing permutation, $\sigma$, on the total set of endpoints, $E$. A nice tree decomposition of the graph $\Gamma(L_i)$, associated with the link diagram $L_i$, has treewidth at most  $\sqrt{\frac{n}{2^m}}$ and can be constructed in time at most $\displaystyle \mathcal{O}\left(n^3 2^{\sqrt{\frac{9n}{2^m}}-m}\right)$(\cite{cygan2015michalpilipczuk, kloks1994treewidth, burton-homflypt-fpt}).   The Jones polynomial of $L$ can be expressed in terms of the leaves of the nice tree decompositions of its constituent linkoids, $L_i$, as follows:      Let $\ell_{L_i}$ be a leaf of the decision tree of $L_i$. Then $\ell_{L_i}$ is a descending linkoid with respect to $L$ (see Definition \ref{def-desc-linkoid} in Section \ref{sec-treewidth} of Appendix). Let $\ell = \cup_\sigma \ell_{L_i}$ be the diagram obtained as a gluing of $\ell_{L_i}$, $i=1,\dotsc, 2^m$ with respect to $\sigma$. Then, $\ell$ will be a descending link/linkoid diagram.  Then, in analogy to the result in Theorem \ref{prop-bkt-split}, the Jones polynomial of $L$ can be expressed in terms of states of its constituent  linkoids, $L_i$, $i=1, \dotsc, 2^m$, as follows: $\displaystyle f_L = \sum_{\ell}  \left(\prod_{i=1}^{2^m} \operatorname{g}_{\ell_{L_i}}(t)d^{|\ell_{L_i}|}\right) d^{|E/\langle \tau_\ell, \sigma \rangle|-1}$, where the sum is taken over all $\ell = \cup_\sigma \ell_{L_i}$ where $\ell_{L_i}$ is a leaf of of the decision tree of $L_i$, $\operatorname{g}_{\ell_{L_i}}(t)$ denotes the Laurent polynomial in $t$, corresponding to the resolution sequence of $\ell_{L_i}$, $|\ell_{L_i}|$ is the number of circular components in $\ell_{L_i}$, $\tau_\ell = \prod_{i=1}^{2^m}\tau_{\ell_{L_i}}$ is the product of the state permutations, $\tau_{\ell_{L_i}}$, associated with $\ell_{L_i}$ and $|E/\langle \tau_\ell, \sigma \rangle|$ denotes the number of segment cycles in $\ell$.
    
   The contribution from each $L_i$ (each term in the product inside the parenthesis) can be processed in parallel over $2^m$ processors using the fixed parameter tractable approach, contributing a time complexity of $\mathcal{O}\left( e^{\sqrt{\frac{n}{2^m}} \log \frac{n}{2^m}}\right)$. As explained in the proof of Corollary \ref{prop-par-avg}, each $L_i$ consists of at most $c=  C\sqrt{\frac{n}{2^{m-3}}}$ strands,  where $C=6\sqrt{2}+5\sqrt{3}$, implying that the recombination of contributions over all $L_i$ can be performed in parallel with a time complexity of $\mathcal{O}\left(2^{C\sqrt{32n}-m}\right)$ to get the final Jones polynomial expression of $L$.   
\end{proof}

From Equation \ref{eq-min-group-last}, it can be seen that, for a fixed $m\in \mathbb{N}$, the first term, i.e. $\mathcal{O}\left( 2^{\frac{n}{2^m}+1}\right)$, is the dominant term of the time complexity for sufficiently large $n$. When a treewidth based algorithm is employed, the dominant term of the time complexity is given by the term $\mathcal{O}\left(2^{C\sqrt{32n}-m}\right)$ of Equation \ref{eq-time-treewidth}.

\begin{remark}
    The parallel algorithm in Corollary \ref{prop-par-avg} reduces the memory usage per processor to $\mathcal{O}(2^{n/2^m})$, as each processor handles only the states and results of its assigned piece, rather than the entire diagram, in which case the memory usage is $\mathcal{O}(2^n)$. Since there are $m$ steps in the  gluing process, the memory usage per processor is $\mathcal{O}(m 2^{C\sqrt{32n}-m})$.
\end{remark}

\begin{remark}
    Note that, the techniques of subdivision and grouping of states are beneficial even when incorporated into a serial algorithm of computing the Jones polynomial of a link(oid) diagram, as it was proposed in \cite{barnatan2007fast} for faster serial computation of the Khovanov homology of a knot. It can be shown that the resultant time complexity,
$  T(n) = 2^m \mathcal{O}(2^{\frac{n}{2^m}+1})+ \mathcal{O}\left(2^{C\sqrt{32n}}\right)$, which is more efficient with respect to the time complexity, $\mathcal{O}(2^n)$, of the serial algorithm without subdivision and grouping. 
\end{remark}


\section{Jones polynomial of a knot in terms of the Jones polynomial of its constituent linkoids}
\label{sec-jones-pieces}

\noindent  The expression in Theorem \ref{prop-bkt-split}, for the Jones polynomial of a knot/link (or more generally a linkoid) in terms of the states of its constituent linkoids under subdivision, makes evident how its complexity might be influenced by the complexity of the constituent linkoids. The latter combination process is complex and such that the final result is not simply a combination of Jones polynomials of linkoids. However, it is proven in Corollary \ref{prop-bkt-split} that the terms in the formula can be rearranged so that the Jones polynomial of any one of the constituent linkoids appears as a factor. 

\begin{corollary}
Let a link diagram $L$ be subdivided into $2^m$ linkoid pieces $L_i$, where $i \in \{1, 2, \dots , 2^m\}$ and let $\sigma$ be the associated gluing permutation. For any subdivision $L_j$, where $j \in \{1, 2, \dots , 2^m\}$,  the Jones polynomial of $L$ can be expressed as follows :
\begin{equation}
    \begin{split}
       \displaystyle f_L &=
       (-A^3)^{-\sum_{i=1, i \neq j}^{2^m} Wr( L_i)} \sum_{ \mathcal{S}} A^{\sum_{i=1, i \neq j}^{2^m} \alpha(S_{i_{k_i}})} d^{\sum_{i=1, i \neq j}^{2^m} |S_{i_{k_i}}|} f_{(L_j,\sigma \tau_{\mathcal{S}}\sigma)}
    \end{split}
\end{equation}
where $ \mathcal{S}=\left( S_{i_{k_i}} \right)_{\substack{i=1, i\neq j}}^{2^m}$ is a combination of states $S_{i_{k_i}}$ such that each $S_{i_{k_i}}$ represents a state with state permutation  $\tau_{S_{i_{k_i}}}$ in the state sum expansion of $L_i$, \hspace{0.05cm} $\alpha(S_{i_{k_i}})$ is the algebraic sum of the smoothing labels and $|S_{i_{k_i}}|$ is the number of circular components in $S_{i_{k_i}}$. The state permutation of $\mathcal{S}$ is denoted $\tau_{\mathcal{S}} = \prod_{i=1, i\neq j}^{2^m} \tau_{S_{i_{k_i}}}$ and $f_{(L_j,\sigma \tau_{\mathcal{S}}\sigma)}$ is the Jones polynomial of the linkoid $L_j$ with respect to closure permutation $\sigma \tau_{\mathcal{S}}\sigma$.
\label{prop-bkt-split}
\end{corollary}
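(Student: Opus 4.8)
The plan is to read the identity off directly from the state‑sum formula of Theorem~\ref{prop-bkt-split}, by isolating the piece $L_j$ and then re‑recognizing the resulting inner sum as a bracket polynomial of $L_j$ with respect to a closure permutation manufactured out of $\sigma$ and the (now frozen) states of the remaining pieces.

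\emph{Step 1: writhe additivity and the normalization factor.} I would first record that $\operatorname{Wr}(L)=\sum_{i=1}^{2^m}\operatorname{Wr}(L_i)$: the crossings of $L$ are exactly the crossings of the linkoid pieces $L_i$ (the arcs introduced by the gluing permutation $\sigma$ are virtual and carry no crossing data), and each crossing sign is unchanged because the orientation of $L$ restricts to that of each $L_i$. Hence $f_L=(-A^3)^{-\sum_i\operatorname{Wr}(L_i)}\langle L\rangle$, and the prefactor $(-A^3)^{-\sum_{i\neq j}\operatorname{Wr}(L_i)}$ in the claimed formula recombines with the $(-A^3)^{-\operatorname{Wr}(L_j)}$ built into $f_{(L_j,\cdot)}$ to give back $(-A^3)^{-\operatorname{Wr}(L)}$.

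\emph{Step 2: isolating $L_j$ in the state sum.} In the sum of Theorem~\ref{prop-bkt-split} over tuples $(S_{i_{k_i}})_{i=1}^{2^m}$, I would split the tuple as $(\mathcal{S},S_j)$ with $\mathcal{S}=(S_{i_{k_i}})_{i\neq j}$ and $S_j$ a state of $L_j$, factor out the part of each summand depending only on $\mathcal{S}$, namely $\prod_{i\neq j}A^{\alpha(S_{i_{k_i}})}d^{|S_{i_{k_i}}|}$, and leave the inner sum $\sum_{S_j}A^{\alpha(S_j)}\,d^{|S_j|}\,d^{\,|E/\langle\tau_{\mathcal{S}}\tau_{S_j},\sigma\rangle|-1}$, where $\tau_{\mathcal{S}}=\prod_{i\neq j}\tau_{S_{i_{k_i}}}$ is supported on $E\setminus E_j$ and $\tau_{S_j}$ on the endpoint set $E_j$ of $L_j$, so that $\prod_{i=1}^{2^m}\tau_{S_{i_{k_i}}}=\tau_{\mathcal{S}}\tau_{S_j}$ as a product of permutations on disjoint sets. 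The heart of the matter is the segment‑cycle identity
\[
\bigl|E/\langle\tau_{\mathcal{S}}\tau_{S_j},\sigma\rangle\bigr|=\bigl|E_j/\langle\tau_{S_j},\,\sigma\tau_{\mathcal{S}}\sigma\rangle\bigr|,
\]
where $\sigma\tau_{\mathcal{S}}\sigma$ is read as the closure permutation it induces on $E_j$. I would prove it by identifying the orbits of $\langle\tau_{\mathcal{S}}\tau_{S_j},\sigma\rangle$ with the segment cycles of the $2$‑regular graph on $E$ whose two perfect matchings are the strand pairs $\{a,\tau_{\mathcal{S}}\tau_{S_j}(a)\}$ and the closure pairs $\{a,\sigma(a)\}$, and then suppressing each degree‑$2$ vertex lying in $E\setminus E_j$: suppression of degree‑$2$ vertices does not change the number of cycles, and it turns the graph into the analogous $2$‑regular graph on $E_j$ whose closure edges are exactly the pairs $\{a,(\sigma\tau_{\mathcal{S}}\sigma)(a)\}$ — because leaving $a\in E_j$ along a $\sigma$‑arc, traversing the strand structure $\tau_{\mathcal{S}}$ of the frozen pieces, and returning along a $\sigma$‑arc is precisely the conjugate $\sigma\tau_{\mathcal{S}}\sigma$. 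Feeding this into Definition~\ref{def_jones}, the inner sum of Step~2 is exactly $\langle L_j^{\,\sigma\tau_{\mathcal{S}}\sigma}\rangle$; multiplying by the $(-A^3)^{-\operatorname{Wr}(L_j)}$ from Step~1 turns it into $f_{(L_j,\sigma\tau_{\mathcal{S}}\sigma)}$, and summing over $\mathcal{S}$ gives the stated formula.

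\emph{Main obstacle.} The delicate point is precisely this last step: one must check that $\sigma\tau_{\mathcal{S}}\sigma$ really is an admissible closure permutation of $L_j$ — an involution of $E_j$ with no fixed points — and that the cycle count transfers with no leftover term. For a subdivision into leaf pieces (Definition~\ref{def-subdiv}) no arc of $\sigma$ has both endpoints in $E_j$, which secures involutivity and fixed‑point‑freeness; the only possible leftover consists of segment cycles that close up entirely among the endpoints of the pieces $L_i$, $i\neq j$, and these contribute a power of $d$ depending only on $\mathcal{S}$. Handling this bookkeeping cleanly — either by absorbing such cycles into the normalization of $L_j^{\,\sigma\tau_{\mathcal{S}}\sigma}$ in the spirit of Definition~\ref{def_jones}, or by restricting to gluings in which the piece graph is connected through $L_j$ so that no such cycle arises — is where the real care is needed, whereas Steps~1, 2 and the final reassembly are routine once the segment‑cycle identity is in hand.
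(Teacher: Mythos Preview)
Your approach is exactly the paper's: isolate $L_j$ in the state sum of Theorem~\ref{prop-bkt-split}, recognize the inner sum over states of $L_j$ as $\langle L_j^{\sigma\tau_{\mathcal S}\sigma}\rangle$, and reattach the writhe factors. The paper's own proof is a four-line sketch that simply asserts ``$\sigma\tau_{\mathcal S}\sigma$ is a closure permutation on the endpoints of $L_j$'' and writes down the formula; your Steps~1--2 and the graph-suppression argument for the segment-cycle identity supply the details the paper omits, and your ``main obstacle'' paragraph correctly flags the one point the paper does not justify (admissibility of $\sigma\tau_{\mathcal S}\sigma$ and the possibility of cycles confined to $E\setminus E_j$). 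One caution: your description ``leaving $a\in E_j$ along a $\sigma$-arc, traversing $\tau_{\mathcal S}$, and returning along a $\sigma$-arc'' is only guaranteed to land back in $E_j$ after a \emph{single} $\sigma\tau_{\mathcal S}\sigma$ step when $m=1$; for $m>1$ the induced closure on $E_j$ is the first-return map under alternation of $\sigma$ and $\tau_{\mathcal S}$, which the notation $\sigma\tau_{\mathcal S}\sigma$ (used identically in the paper) should be read as abbreviating.
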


\begin{proof}
Let $L_j$,  $j \in \{1, 2, \dots , 2^m\}$, be a linkoid in the subdivision of $L={\cup_{\sigma}}_{i=1}^{2^m} L_i$. Consider $L\setminus L_j={\cup_{\sigma}}_{i=1,i\neq j}^{2^m} L_i$ as a linkoid. Its states can be expressed as $ \mathcal{S}=\left( S_{i_{k_i}} \right)_{\substack{i=1, i\neq j}}^{2^m}$ where  $S_{i_{k_i}}$ represents a state with state permutation  $\tau_{S_{i_{k_i}}}$ in the state sum expansion of $L_i$, $i\neq j$. Note that, $ \sigma \tau_{\mathcal{S}}\sigma $ , where $\tau_{\mathcal{S}} = \prod_{i=1, i\neq j}^{2^m} \tau_{S_{i_{k_i}}}$, is a closure permutation on the endpoints of $L_j$. The collection, $(L_j)_{\tau_j}^{\sigma \tau_{\mathcal{S}}\sigma} $  over all $\mathcal{S}$ is a subset of the virtual spectrum of $(L_j)_{\tau_j}$. The Jones polynomial of $L$ can thus be expressed as follows :
\begin{equation} \label{eq-vs-exp}
    \begin{split}
       \displaystyle f_L &=
       (-A^3)^{-\sum_{i=1, i \neq j}^{2^m} Wr( L_i)} \sum_{ \mathcal{S}} A^{\sum_{i=1, i \neq j}^{2^m} \alpha(S_{i_{k_i}})} d^{\sum_{i=1, i \neq j}^{2^m} |S_{i_{k_i}}|} f_{(L_j,\sigma \tau_{\mathcal{S}}\sigma)}
    \end{split}
\end{equation}
\end{proof}

\begin{remark}
    Eq. (4) in Corollary \ref{prop-bkt-split} shows that all the terms in the Jones polynomial of a linkoid $L$ involve as a factor the Jones polynomial of (any) one of its linkoids, $L_j$, with respect to different closure permutations. The latter ones are part of the Jones polynomial of the virtual spectrum of $L_j$ (\cite{Barkataki2024_virtual}). In this context, the overall Jones polynomial $f_L$ can be expressed as a matrix multiplication over its virtual spectrum as: \begin{equation}
     f_L =  \mathbf{M} \mathbf{f_{L_j}},   
    \end{equation}
    where each entry in the $ n \times 1$ vector, $\mathbf{f_{L_j}}$, represents the Jones polynomial of $L_j$ with respect to a specific closure permutation. Thus, $\mathbf{f_{L_j}}$ is the virtual spectrum of the Jones polynomial of the linkoid $L_j$.
    The $1 \times n$ matrix $\mathbf{M}$ captures the weighting factors associated with the different closures of $L_j$. Note that if a closure does not appear in the state sum expansion, the weight is 0. A closure permutation of $L_j$ which appears in the state sum expansion in Equation \ref{eq-vs-exp} may appear in multiple states. Let  $\mathcal{S}^\star$ denote the set of states in which a particular closure occurs, then the  corresponding weight in $\mathbf{M}$ is given by $\displaystyle (-A^3)^{-\sum_{i=1, i \neq j}^{2^m} Wr( L_i)} \sum_{ \mathcal{S^{\star}}} A^{\sum_{i=1, i \neq j}^{2^m} \alpha(S_{i_{k_i}})} d^{\sum_{i=1, i \neq j}^{2^m} |S_{i_{k_i}}|}$. 
\end{remark}


\begin{remark}
In some special cases the Jones polynomial of a knot/link diagram can determined by its subdivided linkoid pieces.

For example, consider a linkoid diagram $L$, such that $L= L_j \cup_\sigma L^{'}$, where $\sigma$ is the corresponding gluing permutation. Let $\tau^{'}$ and $\tau_j$ be the strand permutations of $L^{'}$ and $L_j$ respectively : a)
If $L_j$ is a disjoint union of knotoids, $k_i$, then $f_L = f_{L^{'},\sigma \tau_j \sigma} \times \prod_{i}f_{k_i}; 
 $
b) If the virtual spectrum of $L_j$ only consists of trivial links, then the Jones polynomial of $L$ equals the Jones polynomial of $L^{'}$ with respect to $\sigma$ i.e. $f_L = f_{L^{'},\sigma \tau_j \sigma}$.

It would be interesting to examine if generalization of results, such as those on detecting the unknot in \cite{kauffman2012hard} could be obtained by using Theorem \ref{prop-bkt-split}. 

\label{corr_1piece}
\end{remark}

\section{Conclusion}
\label{sec_discussion}
Creating efficient algorithms for the computation of the Jones polynomial has attracted attention  for both theoretical and practical purposes. In this paper, we introduce the first, to our knowledge, parallel algorithm for the exact computation of the Jones polynomial for any knot(oid) (link(oid)) diagram and collections of open curves in 3-space.  The algorithm reduces the computational time complexity of the Jones polynomial by an exponential factor depending on the number of processors. This result is enabled by a combinatorial study of linkoids, which results in closed formulas for the Jones polynomial of a knot(oid)/link(oid) in terms of its subdivided pieces. The method is general and could be applied to develop parallel algorithms for more topological invariants, such as the Arrow polynomial and Khovanov homology.

A \texttt{GitHub} repository, \textit{Parallel Jones Polynomial}, \url{https://github.com/Parallel-Jones-Polynomial/ParallelJones}, employing our proposed algorithm is published, along with an explanatory usage description. We demonstrated the practical implications of this algorithm by computing the Jones polynomial in serial and in parallel for knots and for linear polymer chains in a melt obtained from Molecular Dynamics simulations. 

The general framework we introduce here for developing a parallel algorithm for the Jones polynomial is also relevant for other questions in knot theory. For example, a question in knot theory, which could also have practical implications, is whether the Jones polynomial of the knot is related to the Jones polynomial of the constituent linkoids.  We prove that the Jones polynomial of a linkoid can in fact be expressed as a linear combination of the Jones polynomial of the virtual spectrum  of any one of its pieces. 


\section{Acknowledgement}
Kasturi Barkataki and Eleni Panagiotou acknowledge support by the Natoinal Science Foundation, NSF DMS-1913180, NSF CAREER 2047587 and National Institutes of Health, grant number R01GM152735-01.

\section{Declarations}
\textbf{Conflict of Interest} The authors have no competing interests to declare that are relevant to the content of
this article.

\noindent \textbf{Ethical Approval} Not applicable.

\section{Appendix}

In Section \ref{sec-state-perms-seg-cyc}, we explore the connection between the number of strands and the number of distinct state permutations in linkoid diagrams and also discuss how a state permutation exactly defines the number of segment cycles in a given linkoid state. In Section \ref{sec-treewidth}, we explore a seamless integration of the methods of the parallel algorithm introduced in this paper with the fixed parameter tractable algorithm discussed in \cite{burton-homflypt-fpt}.

\subsection{Distinct state permutations and segment cycles in linkoids}
\label{sec-state-perms-seg-cyc}

A labelled linkoid diagram can be associated with a \textit{strand permutation} and a set of \textit{state permutations} on its endpoints, which are defined as follows :

\begin{definition}  (\textit{Strand Permutation and State Permutation of a Linkoid} \cite{Barkataki2022,Barkataki2024_virtual}) Given a labelled linkoid diagram, its \textit{strand permutation} is defined to be a
permutation on its labelled endpoints, pairing each
endpoint with its corresponding partner on the same
component of the linkoid. A choice of smoothing at all the crossings of a linkoid diagram determines a state in its bracket polynomial state sum expansion. Any state of a linkoid consists solely of open arcs with no crossings and possibly a collection of circles. The arcs of a state determine permutations on the endpoints of the linkoid, which is called a \textit{state permutation}.  
\end{definition}

Let $L$ be a linkoid diagram with $c$ components/strands and $n$ crossings.
There are two ways of choosing a smoothing at any crossing of $L$, i.e., \raisebox{-5pt}{\includegraphics[width=.025\linewidth]{fig/cross30.png}} or \raisebox{-5pt}{\includegraphics[width=.025\linewidth]{fig/cross20.png}}, which means that $L$ has $2^n$ states in total. However, it is possible for multiple states to correspond to the same state permutation. Estimates such as the total number of states, $2^n$, and the total number of $c$-transpositions on the endpoints, $\frac{(2c)!}{2^c c!}$, serve as loose upper bounds and significantly overestimate the number of distinct state permutations in a linkoid diagram. In the following section, we provide a tight upper bound on the number of distinct state permutations in a linkoid in terms of the number of its components (strands).

\subsubsection{Distinct state permutations of a linkoid diagram}

 In Proposition \ref{thm-const-CkLk}, it is shown that there exists a linkoid with $c$ components where $c \in \mathbb{N}$ such that it has $\frac{1}{c+1} \binom{2c}{c}$ distinct state permutations. This immediately provides a tight upper bound on the number of distinct state permutation of any $c$-tangle (or linkoid with $c$ components), as shown in Corollary \ref{corr-any-ktang}.
\begin{proposition}
Let $L_c$ be a link diagram with $c$ components, formed by arranging $c$ straight lines (no two are parallel) in the plane such that each pair of lines intersects exactly once, resulting in $\binom{c}{2}$ crossings. Upon resolving all crossings using $A$ or $B$-smoothings, the diagram $L_c$ gives rise to $\mathbf{C}_c$ distinct state permutations, where $\mathbf{C}_c = \frac{1}{c+1} \binom{2c}{c}$ is the $c^{th}$ Catalan number.
\label{thm-const-CkLk}
\end{proposition}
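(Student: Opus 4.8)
The plan is to establish a bijection between the distinct state permutations of $L_c$ and a known family of objects counted by the Catalan number $\mathbf{C}_c$ — most naturally, non-crossing perfect matchings on $2c$ points placed on a circle (equivalently, non-crossing chord diagrams, or balanced parenthesizations). The first step is to set up the geometry carefully: $c$ lines in general position bound a central region whose boundary, together with the unbounded rays, determines a cyclic order on the $2c$ endpoints (two per line) as they meet a large circle enclosing all $\binom{c}{2}$ crossings. I would fix this cyclic labelling once and for all, so that a state permutation — the pairing of endpoints induced by the arcs of a resolved diagram — becomes a perfect matching on these $2c$ cyclically ordered points.

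Next I would argue the \emph{planarity} (non-crossing) property: since every state is an embedded collection of disjoint arcs and circles in the disk, the arcs connecting boundary endpoints cannot cross each other, so the induced matching on the $2c$ boundary points is non-crossing. This shows the number of distinct state permutations is at most $\mathbf{C}_c$, the number of non-crossing matchings on $2c$ points. The reverse inequality — that \emph{every} non-crossing matching is realized by some choice of $A/B$-smoothings — is the substantive direction. Here I would proceed by induction on $c$, or more concretely by a direct combinatorial argument: at each crossing between lines $i$ and $j$ there is a local binary choice, and I would show these $\binom{c}{2}$ choices suffice to independently ``route'' the strands. One clean way: induct on $c$ by adding one line at a time; the new line crosses each of the existing $c-1$ lines once, and the $c-1$ fresh binary choices along it let one either connect the new line's two endpoints directly (nesting it innermost in some pocket) or splice it into an existing arc, and a short case analysis shows the set of reachable non-crossing matchings grows exactly according to the Catalan recursion $\mathbf{C}_c = \sum_{k=0}^{c-1}\mathbf{C}_k \mathbf{C}_{c-1-k}$.

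The main obstacle I anticipate is precisely this surjectivity step: verifying that the $\binom{c}{2}$ \emph{local} and a priori ``coupled'' smoothing choices can produce \emph{all} $\mathbf{C}_c$ non-crossing matchings, rather than some proper subset — the counting $2^{\binom{c}{2}}$ vastly exceeds $\mathbf{C}_c$, so there is no dimension obstruction, but the reachability is not automatic and needs either a careful inductive splicing argument or an explicit algorithm that, given a target non-crossing matching, reads off the required smoothing at each line-pair crossing. I would handle it by the incremental construction above, tracking how the arc configuration in the disk changes when the $c$-th line is swept across the existing arrangement, and checking the base cases $c=1$ ($\mathbf{C}_1=1$, one strand, no crossings) and $c=2$ ($\mathbf{C}_2=2$, one crossing, two smoothings giving the two matchings of $4$ points). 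The remaining pieces — that redundant states collapsing to the same permutation do not inflate the count, and that circles appearing in a state are irrelevant to the induced boundary matching — are immediate from the definitions and require only a remark.
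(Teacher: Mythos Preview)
Your plan is sound and lands on the same core identification the paper uses: state permutations of $L_c$ are exactly the non-crossing perfect matchings on the $2c$ boundary points, hence counted by $\mathbf{C}_c$. The paper, however, organizes the argument differently. Rather than splitting into an upper bound (planarity forces non-crossing) and a separate surjectivity step handled by inducting on the number of lines, the paper fixes one boundary point $p$, observes that in any state $p$ is joined to a point at odd distance $2i+1$, and notes that the connecting arc cuts the disk into two regions carrying $2i$ and $2(c{-}1{-}i)$ endpoints. This immediately yields the recursion $\mathcal{P}_{2c}=\sum_{i=0}^{c-1}\mathcal{P}_{2i}\,\mathcal{P}_{2c-2-2i}$, which is the Catalan recurrence.

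What each approach buys: the paper's ``fix a point and split by its partner arc'' recursion is the classical first-return Catalan argument and is very short, but it quietly assumes that every choice of partner for $p$, together with every independent pair of matchings on the two sides, is actually realized by some smoothing---precisely the surjectivity issue you flagged. Your approach makes that step explicit and proposes to discharge it by adding one line at a time; this is more honest about where the work lies, though the inductive step (the new line threads through the entire existing arrangement, so the previous arc pattern is disturbed) needs more care than you indicate. Either scheme can be completed; your identification of surjectivity as the substantive point is well taken, and in fact the paper's own proof would benefit from a sentence acknowledging it.
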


\begin{proof}
The link diagram $L_c$ is a $c$-tangle with $2c$ endpoints fixed in a specific order around the boundary of a disk. Without loss of generality, choose a point $p$ on the boundary. For any smoothing of the crossings in $L_c$, $p$ can be paired with another point at an odd distance, i.e. $2i+1$, along the boundary. Pairing $p$ in this way divides the disk into two regions with $2i$ and $2(c-1)-2i$ endpoints, respectively for $0\leq i \leq c-1$. Let $\mathcal{P}_{2c}$ denote the number of distinct state permutations in the state sum expansion of $L_c$. This yields the recursion:
$$
\mathcal{P}_{2c} = \sum_{i=0}^{c-1} \mathcal{P}_{2i} \mathcal{P}_{2c-2i-2}.
$$
Setting $\mathcal{P}_{2i} = \mathbf{C}_i$, the recursion matches the Catalan number recurrence (\cite{conway1998book}). Hence, $\mathcal{P}_{2c} = \frac{1}{c+1} \binom{2c}{c}$.
\end{proof}

A link-type linkoid, where all endpoints lie in the same region of the diagram, corresponds to a classical $c$-tangle. In contrast, a pure linkoid (where not all endpoints lie in the same region of the diagram) corresponds to a virtual $c$-tangle, with virtual crossings arising when the endpoints are rearranged to lie within the same region of the diagram.

\begin{corollary}
    Let $L$ be an arbitrary $c$-tangle (classical or virtual). Then $L$ has at most $\mathbf{C}_c$ distinct state permutations.\label{corr-any-ktang}
\end{corollary}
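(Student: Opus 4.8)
The plan is to reduce the case of an arbitrary $c$-tangle to the maximally-crossing configuration $L_c$ of Proposition \ref{thm-const-CkLk}, for which the count of distinct state permutations is exactly $\mathbf{C}_c$. The key observation is that the number of \emph{distinct} state permutations depends only on which pairings of the $2c$ boundary endpoints are realizable by some choice of smoothings, not on how many states realize each pairing. So the first step is to note that every state permutation of a $c$-tangle $L$ is itself a planar (non-crossing) pairing of the $2c$ endpoints when $L$ is a classical tangle: a state consists of disjoint embedded arcs in the disk together with some free circles, and disjoint arcs in a disk with endpoints on the boundary necessarily connect the endpoints in a non-crossing matching. Hence the set of realizable state permutations of any classical $c$-tangle injects into the set of non-crossing perfect matchings of $2c$ points on a circle, whose cardinality is the Catalan number $\mathbf{C}_c$ by the standard recursion (the same recursion used in the proof of Proposition \ref{thm-const-CkLk}).

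The second step handles the virtual case. For a pure linkoid / virtual $c$-tangle, one rearranges the endpoints so they lie in a common region, at the cost of introducing virtual crossings; virtual crossings are not smoothed, so a state still consists of arcs (now possibly with virtual crossings among them) plus circles. The relevant combinatorial data is the \emph{abstract} pairing of endpoints induced by the arcs, and this is again a matching on $2c$ points — but now it need not be planar, so a priori it lies in the larger set of all $\frac{(2c)!}{2^c c!}$ perfect matchings. To still get the bound $\mathbf{C}_c$, the plan is to argue that each arc of a state, ignoring virtual crossings, is an immersed path whose two endpoints, read along the boundary ordering of the \emph{original} linkoid diagram before the endpoints were pushed together, connect points in a non-crossing fashion with respect to that original cyclic order; equivalently, one works with the boundary circle of the original diagram region and observes that classical smoothings alone produce arcs that are pairwise disjoint \emph{in that diagram}, hence non-crossing there, so the induced matching is counted by $\mathbf{C}_c$ regardless of the virtual structure. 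Combining: in every case the realizable state permutations form a subset of a set of size $\mathbf{C}_c$, giving the claimed upper bound, and Proposition \ref{thm-const-CkLk} shows the bound is attained, hence tight.

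The main obstacle I expect is the virtual case: making precise the sense in which the arcs of a state of a virtual $c$-tangle induce a non-crossing matching. One has to be careful that ``non-crossing'' is measured with respect to the right cyclic ordering of the endpoints — the ordering inherited from the planar diagram before endpoints are collected into one region — and that pushing endpoints around (creating virtual crossings) does not change the underlying matching of endpoints, only its planar realizability. A clean way to finesse this is to define state permutations intrinsically from the diagram $L$ as drawn (with endpoints wherever they are), smooth only the classical crossings, and note that the resulting collection of disjoint arcs embedded in $S^2$ (or $\mathbb{R}^2$) minus the virtual crossings still yields a matching that is planar with respect to the boundary of a disk neighborhood containing all endpoints in their native cyclic order; then the Catalan recursion from Proposition \ref{thm-const-CkLk} applies verbatim. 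If one prefers to avoid even this, an alternative is a direct induction on $c$: fix one endpoint $p$, observe that in any state $p$ is joined by an arc to an endpoint at odd boundary-distance (parity of the arc is forced), which splits the remaining endpoints into two sub-collections handled inductively, yielding $\mathcal{P}_{2c} \le \sum_{i=0}^{c-1}\mathcal{P}_{2i}\mathcal{P}_{2c-2i-2}$ with $\mathcal{P}_0 = \mathcal{P}_2 = 1$, whence $\mathcal{P}_{2c}\le \mathbf{C}_c$ by the same solution of the recurrence. The parity claim — that $p$'s partner is always at odd distance — is the one small fact that needs a sentence of justification, and it follows because the arcs of a state partition the boundary endpoints so that each arc, together with an arc of the boundary, bounds a region containing an even number of endpoints.
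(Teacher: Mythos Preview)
For the classical case your argument coincides with the paper's Case~1: the arcs of any state are disjoint in the disk, hence realize a non-crossing matching of the $2c$ boundary points, and there are exactly $\mathbf{C}_c$ of those.

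For the virtual case your approach diverges from the paper's and has a genuine gap. State permutations of a virtual $c$-tangle need \emph{not} be non-crossing with respect to the boundary cyclic order: the $2$-tangle consisting of a single virtual crossing and no classical crossings has the sole state permutation $(1\;3)(2\;4)$, a crossing matching. So your first plan --- inject into the set of non-crossing matchings --- fails as stated, and your inductive fallback also breaks, since the parity claim (partner at odd boundary-distance) rests on the arcs of a state being pairwise disjoint in the disk. Retreating to the underlying planar linkoid before the endpoints are pushed into one region does yield disjoint arcs after smoothing, but then the endpoints lie in several interior regions and there is no single cyclic order of the endpoints, fixed in advance of the state, with respect to which every state's matching is non-crossing; different states may require different enclosing curves and hence different cyclic orders. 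So this line does not close without additional input.

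The paper takes a different route in Case~2: it replaces every virtual crossing of $L$ by a classical crossing, obtaining a classical $c$-tangle $L'$, and then argues that the set of distinct state permutations of $L'$ is at least as large as that of $L$ (on the grounds that $L'$ has strictly more states in its expansion). Case~1 then bounds $L'$, and hence $L$, by $\mathbf{C}_c$. This sidesteps the cyclic-order issue you ran into by reducing directly to the classical setting rather than trying to salvage planarity in the presence of virtual crossings.
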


\begin{proof}
\textbf{Case 1:} Let $L$ be a classical $c$-tangle (consisting only of classical crossings). Since the endpoints of $L$ can be arranged in a specific order around the disk, the possible pairings of endpoints can be analyzed as in Proposition \ref{thm-const-CkLk}. Thus, any classical $c$-tangle has at most $\mathbf{C}_c$ distinct state permutations.

\noindent \textbf{Case 2:} Let $L$ be a virtual $c$-tangle (consisting of both classical and virtual crossings), and let $L'$ be a classical $c$-tangle obtained by replacing all virtual crossings of $L$ with classical crossings. Since $L'$ has more classical crossings than $L$, the number of states in the state sum expansion of $L'$ is greater than that of $L$. Therefore, the distinct state permutations of $L^{'}$ is at least as many as those of $L$ which implies that $L$ has at most $\mathbf{C}_c$ distinct state permutations.
\end{proof}

If a given linkoid diagram has no loop bounded by its arcs, then all its states correspond to distinct state permutations. In fact, the number of strands is one more than the number of crossings in the case of a linkoid diagram with no loops and no disjoint component as shown in the following lemma.

\begin{lemma}
The number of strands in a linkoid diagram with $x$ crossings and no disjoint components, in which arcs connecting crossings do not form loops, is $x+1$. \label{lem-max-c-for-n}
\end{lemma}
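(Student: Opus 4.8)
The claim is that a linkoid diagram with $x$ crossings, no disjoint components, and no loops formed by arcs connecting crossings, has exactly $x+1$ strands. I would prove this by an induction on $x$, or equivalently by a direct combinatorial counting argument, and I expect the bookkeeping of how a single crossing-resolution affects the strand count to be the only real subtlety.

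First I would set up the base case: when $x=0$, the diagram has no crossings, and since it has no disjoint components it is a single open arc, giving $1 = 0+1$ strand, as required. For the inductive step, suppose the statement holds for all such diagrams with fewer than $x$ crossings, and let $L$ be a linkoid diagram with $x \geq 1$ crossings satisfying the hypotheses. Pick any crossing $v$ of $L$ and consider the local picture: $v$ has four arc-ends emanating from it. I would perform one of the two smoothings at $v$ — but rather than smoothing, it is cleaner to \emph{delete} the crossing and reconnect, or to argue about the graph structure directly. The plan is to pass to the 4-valent planar graph $\Gamma$ underlying $L$ (vertices = crossings, edges = arcs between successive crossings, with the open ends of the linkoid treated as degree-1 ``legs''). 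The hypothesis that arcs do not form loops means no edge is a loop at a vertex and, more importantly, the arcs do not close up into circles; the no-disjoint-components hypothesis means $\Gamma$ is connected. A strand is a maximal arc, i.e. an edge of $\Gamma$ in this accounting (since every internal vertex is a crossing where strands are, by definition of a state-free diagram, just passing through as two separate strands — wait, here the diagram still has crossings, so a ``strand'' is a component of $\bigsqcup[0,1]$, i.e. a maximal path-component of the underlying 1-manifold-with-boundary, which passes straight through each crossing). So the number of strands equals the number of connected components of the 1-manifold obtained from $\Gamma$ by \emph{splitting} each 4-valent vertex into two disjoint strands passing through.

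The core computation is then an Euler-characteristic / handshake count. Each of the $x$ crossings, when split, contributes two local arc-segments; summing local segments over all crossings and all $2$ open legs, and then gluing along shared edges, the number of resulting components can be tracked. Concretely: the underlying 1-manifold $M$ (disjoint union of the strands) has some number $s$ of components, each an interval (no circles, by the no-loop hypothesis), so $\chi(M) = s$. Build $M$ by starting from the $x$ crossings, each giving two intervals (two ``strand-germs''), so $2x$ intervals with $\chi = 2x$, then glue them along the arcs of the diagram. Each gluing of two interval-endpoints into one point drops $\chi$ by $1$. The number of gluings equals the number of arcs of $L$ that run between two crossings (each such arc identifies one endpoint of a strand-germ at one crossing with one endpoint of a strand-germ at another, or the same, crossing). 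Counting arc-endpoints: each crossing has $4$ arc-ends, total $4x$, of which $2$ are the open endpoints of the linkoid, leaving $4x-2$ arc-ends paired into $(4x-2)/2 = 2x-1$ internal arcs, hence $2x-1$ gluings. Therefore $s = \chi(M) = 2x - (2x-1) = 1 \cdot$ — wait, that gives $s = 2x - (2x-1) = 1$, which is wrong. The discrepancy tells me the splitting is not into $2x$ intervals but the count of germs-per-crossing and the connectivity need care; more likely the right setup is that a strand passes \emph{through} a crossing without splitting for the purpose of counting \emph{that strand}, and one should instead count: the diagram is a disjoint union of $s$ intervals carrying $x$ crossings total, and each crossing is a self-intersection or an intersection between strands. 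I would instead argue: start from $s$ disjoint intervals with no crossings ($0$ crossings, trivially satisfying the hypotheses). Each time we add a crossing between two points of the diagram (either on the same strand or different strands), we are identifying two interior points; generically this does not change the number of strands (a transversal double point is not a branch point — the two arcs just cross). So adding $x$ crossings keeps the strand count at $s$. Then the relation between $s$ and $x$ must come from the \emph{connectivity} hypothesis: with $s$ strands and the diagram connected (no disjoint components) as a subset of the plane, connectedness forces the crossings to ``link'' the strands together, and a connected union of $s$ intervals glued only at transversal crossing points, with no loops, needs at least $s-1$ crossings to be connected, and the no-loop (tree-like) hypothesis forces exactly $s-1$ crossings — i.e. the ``crossing graph'' (strands = vertices, crossings = edges) is a tree on $s$ vertices, so $x = s-1$, giving $s = x+1$.

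\textbf{Main obstacle.} The delicate point, and where I would spend the most care, is making precise the claim that the hypothesis ``arcs connecting crossings do not form loops'' is exactly what forces the auxiliary multigraph (whose vertices are the strands and whose edges are the crossings) to be a \emph{tree} rather than merely connected: I need to show that a loop in this multigraph (a cycle of strands and crossings) would produce a loop of arcs in the diagram, and conversely. This requires tracking how a cycle in the crossing-multigraph lifts to an embedded closed arc in the plane made of diagram-arcs, using planarity and the fact that a linkoid state/diagram's arcs are embedded. Once that equivalence is nailed down, the statement is immediate: a connected tree on $s$ vertices has $s-1$ edges, i.e. $x = s-1$, hence $s = x+1$, completing the proof. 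I would present the loop-correspondence as the key lemma inside this proof and handle the tree-counting as the routine finish.
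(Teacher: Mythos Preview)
Your first (abandoned) endpoint count is essentially the paper's argument, and you dropped it because of a bookkeeping slip: the $4x$ arc-ends at crossings do not send $2$ of them to open linkoid endpoints, they send $2s$ of them (one per strand endpoint; every strand meets at least one crossing, else it would be a disjoint component). With that fix the number of internal arcs is $(4x-2s)/2 = 2x - s$, and your Euler identity becomes $s = 2x - (2x - s)$, a tautology. What breaks the circularity is precisely the hypothesis: the graph $\Gamma_1$ (crossings as vertices, internal arcs as edges) is connected and acyclic, hence a tree on $x$ vertices with exactly $x-1$ edges, so $2x - s = x - 1$ and $s = x+1$. This is the paper's proof in compressed form: $x$ isolated crossings carry $4x$ arc-ends; the $x-1$ internal arcs of the tree each merge two of them; the surviving $4x - 2(x-1) = 2x+2$ ends are the strand endpoints, giving $x+1$ strands.

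Your second route via $\Gamma_2$ (strands as vertices, crossings as edges) is a legitimate alternative and does give $x = s-1$ once $\Gamma_2$ is known to be a tree. The obstacle you flag is real but not hard: a cycle in $\Gamma_2$ traces out a nontrivial closed walk in $\Gamma_1$ (follow each participating strand along its arcs between its two consecutive cycle-crossings), and a forest admits no nontrivial closed walk; together with connectedness this forces $\Gamma_2$ to be a tree. Still, this detour is longer than simply reading off the edge count of $\Gamma_1$, which already hands you the answer in one line, so the paper's direct endpoint count is the cleaner finish.
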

\begin{proof}
 Each crossing of a linkoid diagram is associated with 2 strands and 4 endpoints when it is considered to be disjoint from all other crossings. When the crossings are viewed as the nodes of a graph, a path of length $x-1$ gives rise to a connected diagram (no disjoint component) without loops/cycles. Every edge of this path merges two distinct endpoints resulting in a diagram with $4x - 2(x-1) = 2x+2$ endpoints. Therefore, the resultant diagram consists of $x+1$ strands in total.
\end{proof}

The following remark provides examples emphasizing the relationship between the number of crossings, strands, and distinct state permutations in linkoid diagrams, showing that the number of distinct state permutations can be controlled by minimizing the number of strands.

\begin{remark}
(i) The number of crossings in the linkoid $L_c$ analyzed in Proposition \ref{thm-const-CkLk} is $\binom{c}{2}$, and the number of states in the bracket expansion is $2^{\frac{c(c-1)}{2}}$. In contrast, the number of distinct state permutations is given by $\frac{1}{c+1} \binom{2c}{c}$, which is strictly less than $2^{\frac{c(c-1)}{2}}$ for all $c > 2$.
(ii) Consider a linkoid diagram $L$ with $n$ crossings, subdivided into $2^m$ pieces, each containing either $\lfloor n / 2^m \rfloor$ or $\lfloor n / 2^m \rfloor + 1$ crossings. Let $c$ denote the number of strands in a subdivided piece such that the number of distinct state permutations is  $\mathbf{C}_c$. Then, setting $\binom{c}{2} = n / 2^m$ implies that
$
c = 1/2 + \sqrt{1/4 + n/2^{m-1}}
$.
Now, consider another linkoid diagram $\tilde{L}$ with $\tilde{c}$ strands, where $\tilde{c}-1$ strands are  parallel, and the $\tilde{c}^{th}$ strands intersects each of the $\tilde{c}-1$ strands exactly once. Then, setting the number of crossings in $\tilde{L}$ to be equal to $n / 2^m$ implies that $\tilde{c}=n/2^m + 1$. While both $L$  and $\tilde{L}$ have the same number of crossings, the latter has more strands than the former, resulting in a larger set of distinct state permutations. 
\end{remark}

\subsubsection{Number of segment cycles in a linkoid state}

The strand permutation and the state permutation of a linkoid diagram is used to determine the number of segment cycles, a quantity needed to define the bracket polynomial of linkoids,  in a given state of the linkoid diagram (\cite{Barkataki2022,Barkataki2024_virtual}). In the following, we discuss the total number of segment cycles, given a state and a closure permutation of a linkoid diagram.

\begin{proposition}
Let $E$ be the set of labelled endpoints of a linkoid diagram with $2n$ endpoints, $\sigma$ be a closure permutation and $\tau_S$ be the state permutation corresponding to a state, $S$, in the state sum expansion of the linkoid diagram. The number of distinct segment cycles in $S$, denoted by $\displaystyle |E/\langle \tau_S, \sigma \rangle|$, has the following expression :
\begin{equation}\begin{split}
   \displaystyle |E/\langle \tau_S, \sigma \rangle| &= \frac{1}{2 |\sigma \tau_S|}\left[ 2n + \sum_{p=1}^{|\sigma \tau_S|-1}  \sum_{\{\mathcal{C}_i : p \equiv 0 \operatorname{mod} l(\mathcal{C}_i)\} } l(\mathcal{C}_i) \right ].\\
   \end{split}\label{eq-C-count2}
\end{equation}
where $|\sigma \tau_S|$ is the order of $\sigma \tau_S$,  $\mathcal{C}_i$ represent cycles in the cycle decomposition, $\prod_i \mathcal{C}_i$, of $\sigma \tau_S$, and $l(\mathcal{C}_i)$ denotes the length of $\mathcal{C}_i$.\label{count-prop}
\end{proposition}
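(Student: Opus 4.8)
\textbf{Proof plan for Proposition \ref{count-prop}.}

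The plan is to count segment cycles by analyzing the orbit structure of the group $\langle \tau_S, \sigma \rangle$ acting on the $2n$ endpoints in $E$. Both $\tau_S$ and $\sigma$ are fixed-point-free involutions on $E$ (each pairs every endpoint with a distinct partner), so the group they generate is a dihedral group: any group generated by two involutions is dihedral, with the rotation subgroup generated by the product $\rho := \sigma\tau_S$, of order $N := |\sigma\tau_S|$, so that $\langle \tau_S, \sigma\rangle$ has order $2N$. The first step is to record this dihedral structure explicitly and to observe that a segment cycle is exactly an orbit of this dihedral group action on $E$. Then the number of segment cycles is the number of orbits, which I will compute by Burnside's lemma (the Cauchy--Frobenius formula): $|E/\langle \tau_S, \sigma\rangle| = \frac{1}{2N}\sum_{g \in \langle \tau_S, \sigma\rangle} |\operatorname{Fix}(g)|$.

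The second step is to evaluate $\sum_{g}|\operatorname{Fix}(g)|$ by splitting the dihedral group into its $N$ ``rotations'' $\rho^p$ for $p = 0,\dots,N-1$ and its $N$ ``reflections'' (the elements of the form $\tau_S\rho^p$, equivalently the products of $\tau_S$ or $\sigma$ with powers of $\rho$). For the reflections: every reflection in a dihedral group acting here is a product of an odd composition of the two fixed-point-free involutions, hence is itself an involution on $E$ with no fixed points (one checks that such an element conjugates $\rho$ to $\rho^{-1}$ and, being a fixed-point-free involution on a $2n$-element set, contributes $0$). So the reflections contribute nothing to the Burnside sum. For the rotations $\rho^p$: using the cycle decomposition $\rho = \prod_i \mathcal{C}_i$ with cycle lengths $l(\mathcal{C}_i)$ (note $\sum_i l(\mathcal{C}_i) = 2n$), the fixed points of $\rho^p$ are precisely the points lying in cycles $\mathcal{C}_i$ whose length $l(\mathcal{C}_i)$ divides $p$; hence $|\operatorname{Fix}(\rho^p)| = \sum_{\{i : l(\mathcal{C}_i) \mid p\}} l(\mathcal{C}_i)$. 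The term $p=0$ gives $|\operatorname{Fix}(\mathrm{id})| = 2n$. Collecting these contributions,
\[
|E/\langle \tau_S, \sigma\rangle| = \frac{1}{2N}\left[ 2n + \sum_{p=1}^{N-1} \sum_{\{\mathcal{C}_i :\, l(\mathcal{C}_i)\mid p\}} l(\mathcal{C}_i)\right],
\]
which is exactly Equation \ref{eq-C-count2}, since the condition $l(\mathcal{C}_i)\mid p$ is the same as $p \equiv 0 \bmod l(\mathcal{C}_i)$.

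The main obstacle is the bookkeeping around the reflections: I need to argue cleanly that each of the $N$ non-rotation elements of $\langle \tau_S,\sigma\rangle$ acts as a fixed-point-free involution on $E$, and this requires knowing that $\tau_S$ and $\sigma$ share no common structure that could create a fixed point — i.e. that $g^2 = \mathrm{id}$ and $g(x)\neq x$ for all $x$ when $g$ is an odd word in the two generators. The involution property is automatic in a dihedral group; fixed-point-freeness follows because such a $g$ can be written as $w\tau_S w^{-1}$ or similar conjugate-type form, or more directly because on each $\rho$-orbit the reflection either swaps two points or would have to fix a point lying ``halfway'' around the cycle, and one rules this out using that $\sigma$ and $\tau_S$ are themselves fixed-point-free. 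I would handle a potential subtlety: if some cycle $\mathcal{C}_i$ has even length, a reflection could in principle fix the two ``antipodal'' points of that cycle — but those points are swapped by $\tau_S$ (adjacent in the $\rho$-word structure), so they are not fixed; this case-check is the only genuinely delicate point, and it is where I would spend the most care. Everything else is the standard Burnside computation for a dihedral action.
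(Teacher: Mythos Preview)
Your approach is essentially the same as the paper's: identify $\langle \tau_S,\sigma\rangle$ as dihedral of order $2N$ with $N=|\sigma\tau_S|$, apply Burnside, compute the rotation contributions from the cycle decomposition of $\sigma\tau_S$, and show all reflections are fixed-point-free. The only difference is in how the reflection step is handled. The paper dispatches it via the conjugacy class structure of the dihedral group: if $N$ is odd, every reflection is conjugate to $\sigma$; if $N$ is even, every reflection is conjugate to $\sigma$ or to $\tau_S$; in either case conjugacy preserves the fixed-point count, and both $\sigma$ and $\tau_S$ are fixed-point-free by hypothesis. This two-line argument entirely avoids your ``antipodal points on even cycles'' case-check, which is not needed (and is the part of your plan you yourself flag as delicate). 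So your plan is correct, but you can replace the hands-on orbit analysis for reflections with the cleaner conjugacy argument you already gesture at.
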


\begin{proof}
    Since, $\tau_S, \sigma$ are transpositions, $\langle \tau_S, \sigma \rangle$ is isomorphic to the dihedral group, $D_{2|\sigma \tau_S|}$. A presentation of this group is : $\displaystyle \langle \sigma\tau_S, \sigma \mid (\sigma \tau_S)^{|\sigma \tau_S|} = \sigma^2 = e \rangle$ and a general element $g$ in this group can be expressed as $g = (\sigma \tau_S)^p \sigma^q$, where $0 \leq p < |\sigma \tau_S|$ and $\quad 0\leq q < 2$. The number of orbits (distinct segment cycles) due to the action of $\langle \tau_S, \sigma \rangle$ on $E$  is given as :
\begin{equation}\begin{split}
   \displaystyle |E/\langle \tau_S, \sigma \rangle| &=  \frac{1}{2|\sigma \tau_S|}\sum_{p=0}^{|\sigma \tau_S|-1}\sum_{q =0}^{1} \left|E^{(\sigma \tau_S)^p \sigma^q}\right|\\&=\frac{1}{2|\sigma \tau_S|}\left[ 2n + \sum_{p=1}^{|\sigma \tau_S|-1}\left|E^{(\sigma \tau_S)^p}\right| +  \sum_{p =1}^{|\sigma \tau_S|-1} \left|E^{(\sigma \tau_S)^p \sigma}\right|\right],\\
   \end{split}\label{eq-C-count}
\end{equation}
where, $\left|E^{(\sigma \tau_S)^p \sigma^q}\right|$ denotes the set of elements in $E$ fixed by $(\sigma \tau_S)^p \sigma^q$. If $|\sigma \tau_S|$ is odd, all reflections (elements of order 2) in $\langle \tau_S, \sigma \rangle$ are conjugates of each other. Since $\sigma$ is a reflection and has no fixed points, hence no other reflection has fixed points. If $|\sigma \tau_S|$ is even, a reflection in $\langle \tau_S, \sigma \rangle$ is either of the form $ (\sigma \tau_S)^{2i} \sigma $ which is conjugate to $\sigma$ or of the form  $(\sigma \tau_S)^{2i+1} \sigma $ which is conjugate to $\tau_S$ (where $i \in \mathbb{Z}$). Since, neither $\sigma$ nor $\tau_S$ has fixed points, hence $\displaystyle  \left|E^{(\sigma \tau_S)^p \sigma}\right| = 0$ for all $p$. Therefore, Equation \ref{eq-C-count} simplifies to :
\begin{equation}\begin{split}
   \displaystyle |E/\langle \tau_S, \sigma \rangle| &= \frac{1}{2 |\sigma \tau_S|}\left[ 2n + \sum_{p=1}^{|\sigma \tau_S|-1}\left|E^{(\sigma \tau_S)^p}\right| \right ].\\
   \end{split}\label{eq-C-count2}
\end{equation}
Let $\displaystyle \prod_i \mathcal{C}_i$ denote the cycle decomposition of $\sigma \tau_S$ and $l(\mathcal{C}_i)$ denote the length of the cycle $\mathcal{C}_i$. If $p$ is a multiple of $l(\mathcal{C}_i)$, for some cycle $\mathcal{C}_i$, then all the elements of $\mathcal{C}_i$ are fixed by $(\sigma \tau_S)^p$. Thus, the required expression is obtained by substituting $\displaystyle \left|E^{(\sigma \tau_S)^p}\right| = \sum_{\{\mathcal{C}_i : p \equiv 0 \operatorname{mod} l(\mathcal{C}_i)\} } l(\mathcal{C}_i)$.
\end{proof}

\subsection{The parallel algorithm and treewidth parameter of link(oid) diagrams}
\label{sec-treewidth}

An alternative method of computing the Jones polynomial is by applying the following skein relation to an oriented knot/link diagram (\cite{Jones1985}):
$$t^{-1}f_{L_+}-t f_{L_-}=\left(t^{1/2}-t^{-1/2}\right)f_{L_0} \hspace{0.05cm}.$$
where the diagrams $L_+,\hspace{0.05cm}L_-$ and $L_0$ are identical everywhere except at one crossing as shown below:
\begin{center}
    \includegraphics[scale=0.12]{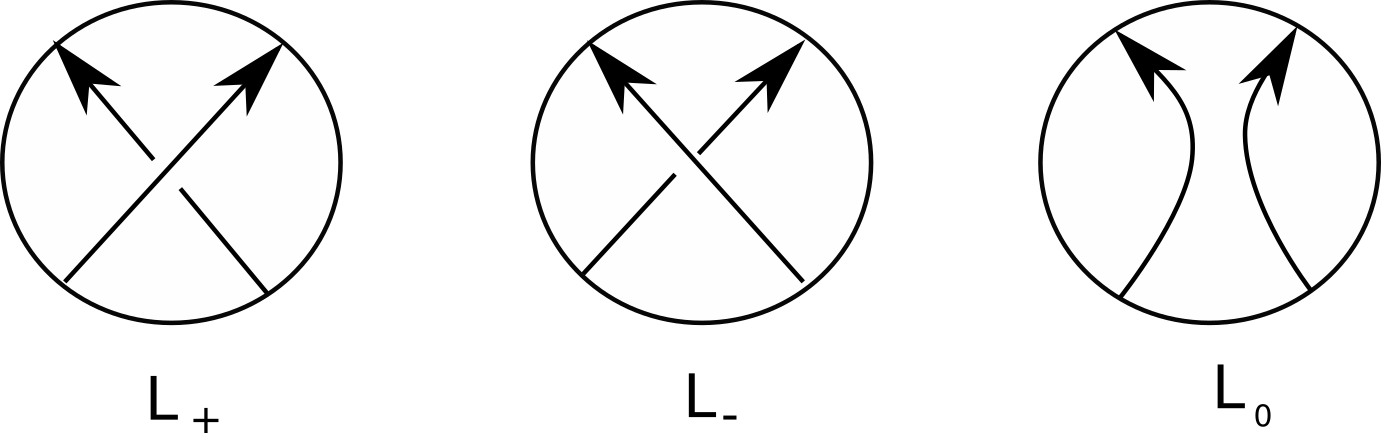}
\end{center}
This skein template on a knot/link diagram allows to build a decision tree in which the leaves are obtained from the original link diagram by switching and/or splicing some crossings to obtain descending diagrams. The decision tree is constructed by systematically traversing the link's arcs in numerical order (ordering is assigned arbitrarily), modifying under-crossings by either splicing or switching to generate different link configurations. The traversal continues until all arcs are visited, resulting in descending diagrams at the leaves of the tree.

Let $L$ denote a link diagram and $\Gamma(L)$ the is the
directed planar 4-valent multigraph whose vertices are the crossings of $L$.  In \cite{burton-homflypt-fpt}, a Fixed Parameter Tractable (FPT) algorithm is described that processes the link diagram $L$ using a nice tree decomposition $T$ of its graph $\Gamma(L)$. A tree decomposition of a graph represents its structure as a tree whose nodes correspond to subsets of vertices, while a nice tree decomposition is a special version where the tree is binary and consists of structured nodes (leaf, introduce, forget, and join). Each crossing of $L$ is associated with a unique forget bag of $T$ and by ordering arcs of $L$ based on the position of the corresponding forget bags, the algorithm avoids explicitly encoding starting arcs of the link traversal, ensuring configurations to depend only on treewidth.  The treewidth of every
planar graph on $n$ vertices (in particular the graph $\Gamma(L)$ associated with a knot/link $L$ diagram with $n$ crossings) is at worst $\mathcal{O}(\sqrt{n})$ (\cite{lipton1979separator}). It is proven in  \cite{burton-homflypt-fpt} that given a link diagram $L$ with $n$ crossings, it is possible to compute the HOMFLY-PT polynomial (and hence the Jones Polynomial) of $L$ by using the FPT algorithm in sub-exponential time $e^{\mathcal{O}(\sqrt{n}\log n)}$.

In order to define a nice tree decomposition of linkoid diagrams, we need to determine what a descending linkoid is:

\begin{definition}(\textit{Descending Linkoid Diagram})
Let $L$ be a linkoid diagram equipped with a fixed ordering of its arcs. $L$ is said to be a \textit{descending linkoid diagram} if, when traversing the arcs of $L$ in the given order, each crossing is first encountered at the over-crossing strand. 
\label{def-desc-linkoid}
\end{definition}

\begin{remark}
    Notice that a descending linkoid may not be a trivial linkoid.
\end{remark}

Nice tree decompositions of linkoid diagrams can be constructed in a way similar to the construction of nice tree decompositions of links. In particular, for a linkoid $ L={\cup_{\sigma}}_{i=1}^{2^m} L_i$, which is a gluing of linkoids $L_i$, where $i=1, \dotsc, 2^m$ of $\frac{n}{2^m}$ crossings each, nice tree decompositions can be constructed for the graph, $\Gamma(L_i)$, corresponding to each of the individual $L_i$, assuming the ordering of arcs of $L_i$ is that induced by the ordering of the arcs of $L$. Since, $\Gamma(L_i)$ is a graph on $\frac{n}{2^m}$ vertices, its treewidth is at worst $\mathcal{O}(\sqrt{\frac{n}{2^m}})$.

By Definition \ref{def-desc-linkoid}, each leaf (which is a  linkoid diagram) in the decision tree of $L_i$ is a descending linkoid diagram. 

\bibliographystyle{plain}
\bibliography{paperDatabase}

\end{document}